\newtheorem{theorem}{Theorem}
\newtheorem{claim}[theorem]{Claim}
\newtheorem{corollary}[theorem]{Corollary}
\newtheorem{definition}[theorem]{Definition}
\newtheorem{lemma}[theorem]{Lemma}
\newtheorem{proposition}[theorem]{Proposition}
\newenvironment{proof}[1][Proof]{\noindent\textbf{#1.} }{\ \rule{0.5em}{0.5em}}
\newcommand{\N}{\mathbb{N}}
\numberwithin{equation}{section}
\begin{document}

\title{Computing geometric Lorenz attractors with arbitrary precision}

\author{Daniel Gra\c{c}a, Cristobal  Rojas and Ning Zhong}

\maketitle

%
%
%


\begin{abstract}
The Lorenz attractor was introduced in 1963 by E. N. Lorenz as one of the
first examples of \emph{strange attractors}. However Lorenz' research was mainly based on
(non-rigourous) numerical simulations and, until recently, the proof of the existence of the Lorenz attractor remained elusive.
To address that problem some authors introduced geometric Lorenz models and proved that geometric Lorenz models
have a strange attractor. In 2002 it was shown that the original Lorenz model behaves like a geometric Lorenz model and thus has a strange attractor.

In this paper we show that geometric Lorenz attractors are computable, as well as their physical measures.

\end{abstract}

\section{Introduction}

The system of equations
\begin{equation}
\left\{
\begin{array}
[c]{c}%
x^{\prime}=\sigma(y-x)\\
y^{\prime}=\rho x-y-xz\\
z^{\prime}=xy-\beta z
\end{array}
\right.  \label{ODE:Lorenz}%
\end{equation}
is called the Lorenz system, where $\sigma,\beta,$ and $\rho$ are parameters.
This system was first studied by E. N. Lorenz in 1963 \cite{Lor63} as a
simplified model of atmosphere convection in an attempt to understand the
unpredictable behavior of the weather. Lorenz's original numerical
simulations, where the parameters were given by $\sigma=10$, $\beta=8/3$, and
$\rho=28$, suggested that for any typical initial condition, the system\ would eventually tend to a same limit set with a
rather complicated structure --  the \emph{Lorenz (strange) attractor}. Moreover, the dynamics on this attractor seemed
to magnify small errors very rapidly, rendering impractical to numerically simulate an individual trajectory for an extended period of time.

The Lorenz system became a landmark in  the modern paradigm of
the numerical study of chaos: instead of studying trajectories individually, one should
study the limit set of a typical orbit, both as a spatial object and as a statistical
distribution \cite{Palis}.   However,
proving the existence of the Lorenz attractor in a rigorous fashion turned out
to be no easy task; indeed, the problem was listed in 1998 by Smale as one of
the eighteen unsolved problems he suggested for the 21st century \cite{Sma98}.

In 1979, based on the behavior observed in the numerical simulations of
(\ref{ODE:Lorenz}), Afraimovich, Bykov, and Shil'nikov \cite{ABS77}, and
Guckenheimer and Williams \cite{GW79} originated the study of flows
satisfying a certain list of geometric properties intended to capture the observed numerically
simulated behavior. In particular, they proved that any such flow must
contain a strange attractor, which supports a unique invariant probability
distribution that describes the limiting statistical behavior of almost any initial condition.
These examples came to be known as geometric
Lorenz models, and the strange attractor contained in a geometric Lorenz flow
is called the geometric Lorenz attractor.


Using a combination of normal form theory and rigorous numerics, Tucker
\cite{Tuc02} provided, in 2002, a formal proof on the existence of the Lorenz
attractor by showing that the geometric Lorenz models do indeed correspond to
the Lorenz system (\ref{ODE:Lorenz}) for certain parameters. Since a geometric
Lorenz model supports a strange attractor, so does the Lorenz system
(\ref{ODE:Lorenz}).

In this note, we examine computability of geometric Lorenz attractors and
their physical measures.  By definition, a computable set in the plane can be
visualized on a computer screen with an arbitrarily high
magnification, and integrals with respect to a computable probability measure can be
generated by a computer with arbitrary precision.  Our main result is the following:

\medskip
\noindent\textbf{Main Theorem}. \emph{For any geometric Lorenz flow, if the data defining the flow are
computable, then its attractor is a computable  subset of
$\mathbb{R}^{3}$. Moreover, the physical measure supported on this attractor is a computable probability
measure.}

\medskip

We note that, although computer generated images of the ``butterfly shaped'' Lorenz attractor
abound in the internet, these images are not rigorous computations. In particular,
their existence does not necessarily mean that the attractor is actually computable.
In fact, an equally famous collection of invariant sets, namely \emph{Julia sets},
whose computer images are also abundant, was shown to contain non computable members \cite{BY06}.

In order to make our results accessible
to a wide audience, we have made an effort to work directly from the definitions, making the proofs as self-contained as possible.

\section{Preliminaries}

\label{Sec:prelims}

\subsection{Computable analysis}

\label{Subsec:comptanalysis}

Roughly speaking, an object is computable if it can be approximated by
computer-generated approximations with an arbitrarily high precision.
Formalizing this idea to carry out computations on infinite objects such as
real numbers, we encode those objects as infinite sequences of rational
numbers (or equivalently, sequences of any finite or countable set $\Sigma$ of
symbols), using representations (see \cite{Wei00} for a complete development).
A represented space is a pair $(X; \delta)$ where $X$ is a set,
$\mbox{dom}(\delta)\subseteq\Sigma^{\mathbb{N}}$, and $\delta: \subseteq
\Sigma^{\mathbb{N}}\to X$ is an onto map (``$\subseteq\Sigma^{\mathbb{N}}$" is
used to indicate that the domain of $\delta$ may be a subset of $\Sigma
^{\mathbb{N}}$). Every $q\in\mbox{dom}(\delta)$ such that $\delta(q)=x$ is
called a $\delta$-name of $x$ (or a name of $x$ when $\delta$ is clear from
context). Naturally, an element $x\in X$ is computable if it has a computable
name in $\Sigma^{\mathbb{N}}$ (the notion of computability on $\Sigma
^{\mathbb{N}}$ is well established). In this note, we use the following
particular representations for points in $\mathbb{R}^{n}$; for closed subsets
of $\mathbb{R}^{n}$; and for continuous functions defined on $I_{1}\times
I_{2}\times\cdots\times I_{n}\subset\mathbb{R}^{n}$, where $I_{j}$'s are intervals:

\begin{itemize}
\item[(1)] For a point $x\in\mathbb{R}^{n}$, a name of $x$ is a sequence $\{
r_{k}\}$ of points with rational coordinates satisfying $|x-r_{k}|<2^{-k}$.
Thus $x$ is computable if there is a Turing machine (or a computer program or
an algorithm) that outputs a rational $n$-tuple $r_{k}$ on input $k$ such that
$|r_{k}-x|<2^{-k}$; for a sequence $\{ x_{j}\}$, $x_{j}\in\mathbb{R}^{n}$, a
name of $\{x_{j}\}$ is a double sequence $\{ r_{j,k}\}$ of points with
rational coordinates satisfying $|x_{j}-r_{j, k}|<2^{-k}$.

\item[(2)] For a closed subset $A$ of $\mathbb{R}^{n}$, a name of $A$ consists
of a pair of an inner-name and an outer-name; an inner-name is a
sequence dense in $A$ and an outer-name is a sequence of balls
$B(a_{n}, r_{n})=\{ x\in\mathbb{R}^{n} \, : \, d(a_{n}, x)<r_{n}\}$,
$a_{n}\in\mathbb{Q}^{n}$ and
$r_{n}\in\mathbb{Q}$, exhausting the complement of $A$, i.e., $\mathbb{R}%
^{n}\setminus A=\bigcup_{n=1}^{\infty}B(a_{n}, r_{n})$. $A$ is said
to be r.e. closed if the sequence (dense in $A$) is computable;
co-r.e. closed if the sequences $\{ a_{n}\}$ an $\{ r_{n}\}$ are
computable; and computable if it is r.e. and co-r.e.. For a compact
set $K$, a name of $K$ consists of a name of $K$ as a closed set and
a rational number $r$ such that $K\subseteq B(0, r)$. By the
definition, a planar computable closed set can be visualized on a
computer screen with an arbitrarily high magnification.

\item[(3)] For every continuous function $f$ defined on $I_{1}\times
I_{2}\times\cdots\times I_{n}\subseteq\mathbb{R}^{n}$, where $I_{j}$ is an
interval with endpoints $a_{j}$ and $b_{j}$, a name of $f$ is a double
sequence $\{ P_{k, l}\}$ of polynomials with rational coefficients satisfying
$d_{k}(P_{k, l}, f)<2^{-l}$, where $d_{k}(g, f)=\max\{|g(x)-f(x)|\, : \,
a_{j}+2^{-k}\leq x_{j}\leq b_{j}-2^{-k}, 1\leq j\leq n\}$ ($d_{k}(g, f)=0$ if
$[a_{j}+2^{-k}, b_{j}-2^{-k}]=\emptyset$). Thus, $f$ is computable if there is
an (oracle) Turing machine that outputs $P_{k, l}$ (more precisely
coefficients of $P_{k, l}$) on input $k, l$ satisfying $d_{k}(P_{k, l},
f)<2^{-l}$.

\item[(4)] For every $C^m$ function $f$ defined on $E=I_{1}\times
I_{2}\times\cdots\times I_{n}\subseteq\mathbb{R}^{n}$, where $I_{j}$ is an
interval with endpoints $a_{j}$ and $b_{j}$, a ($C^{m}$) name of $f$ is a double sequence
$\{P_{k,l}\}$ of polynomials
with rational coefficients satisfying
\[
d^m _{k}(P_{k, l}, f)<2^{-l},
\]
where
\[
d^m_{k}(g, f)=\max_{0\leq i\leq m}\max\{|D^{i}g(x)-D^{i}f(x)|\, : \,
a_{j}+2^{-k}\leq x_{j}\leq b_{j}-2^{-k}\}
\]
($d^m_{k}(g, f)=0$ if
$[a_{j}+2^{-k}, b_{j}-2^{-k}]=\emptyset$).
We observe that a $C^m$ name of $f$ contains information on both $f$ and
$Df, D^{2}f,\ldots,D^{m}f$,  in the sense that $(P_{1}, P_{2}, \ldots)$ is a $\rho$-name of $f$ while
$(D^{i}P_{1}, D^{i}P_{2}, \ldots)$ is a $\rho$-name of $D^{i}f$. See \cite{ZW03} for
further details.
\end{itemize}

The notion of computable maps between represented spaces now arises naturally.
A map $\Phi: (X;\delta_{X})\to(Y;\delta_{Y})$ between two represented spaces
is computable if there is a computable map $\phi:\subseteq\Sigma^{\mathbb{N}%
}\to\Sigma^{\mathbb{N}}$ such that $\Phi\circ\delta_{X}=\delta_{Y}\circ\phi$.
Informally speaking, this means that there is a computer program that outputs
a name of $\Phi(x)$ when given a name of $x$ as input \cite{BHW08}.

\subsection{Geometric Lorenz models}

\label{Subsec:LorenzGeometric}

We briefly describe a geometric Lorenz model taken from \cite{GH83} (see
section 5.7 \cite{GH83} for more details).

For the parameter values $\sigma=10$, $\beta=8/3$, and $\rho=28$, the Lorenz
system (\ref{ODE:Lorenz}) has three equilibrium points: the origin, $q_{-}$,
and $q_{+}$;
both $q_{-}$ and $q_{+}$ lie on the plane $z=\rho-1=27$. The numerical
simulations of (\ref{ODE:Lorenz}) for these parameter values show that the
Lorenz flow rotates around the equilibria $q_{\pm}$ and intersects the plane
$z=27$ infinitely many times, thus indicating that there is a return map with
the cross section $z=27$. Geometric Lorenz models are constructed based upon
the behavior of this numerically observed return map.

It is proved (cf. \cite{GH83}, \cite{HSD04} and references therein)
that a flow satisfying the following properties exists (see Figure
\ref{Fig:GeometricLorenz}): The flow has three equilibrium points:
the origin of $\mathbb{R}^{3}$ and $Q_{\pm}$; for the origin, its
stable manifold is the $yz$-plane while its unstable manifold
intersects the plane $z=27$ from above at two points, say
$\rho^{+}=(r^{-},t^{-})$ and $\rho^{-}=(r^{+},t^{+})$; for $Q_{-}$
and $Q_{+}$, they lie in the plane $z=27$ and have integer
coordinates $(-m, -n, 27)$ and $(m, n, 27)$, their stable lines are
parallel to the $y$-axis, and the other two eigenvalues at $Q_{\pm}$
are assumed to be complex with positive real part, as is the Lorenz
system. Let $\Sigma$ be a rectangle contained in the plane $z=27$
such that $\rho^{\pm}$ is contained in $\Sigma$, the two opposite
sides of $\Sigma$ parallel to the $y$-axis pass through the
equilibrium points $Q_{-}$ and $Q_{+}$, and these two sides form
portions of the stable lines at $Q_{-}$ and $Q_{+}$. Let $D$ be the
intersection of the $yz$-plane and $\Sigma$. The flow has the
following features: $\Sigma$ is a cross section for the flow; all
trajectories go downwards through $\Sigma$; all trajectories
originating in $\Sigma$ and not entering $D$ spiral around $Q_{-}$
or $Q_{+}$ and return to $\Sigma$ as time moves forward; all
trajectories beginning at points in $D$ tend to the origin as time
moves forward and never return to $\Sigma$; and there is a
Poincar\'{e} return map $F: \Sigma_{-}\bigcup\Sigma_{+}\to\Sigma$,
where $\Sigma_{-}=\{(x,y)\in \Sigma|x<0\}$ and
$\Sigma_{+}=\{(x,y)\in\Sigma|x>0\}$.

Let $V=\{(x,y)|r^{-}\leq x\leq r^{+}, \, -27\leq y\leq27\}$ (the
number 27 is arbitrarily chosen; other positive numbers can be used
as well). The Lorenz flow has also the property that all points in
the interior of $\Sigma \setminus D$ have a trajectory which will
eventually reach $V$ and
$F(V\setminus D)\subseteq V$ (see Figure
\ref{Fig:LorenzPlaneReduced}). Thus we can restrict the analysis of
the flow to $V$. The Poincar\'{e} return map $F$ has the following
properties on $V$: 

\begin{itemize}
\item[(F-1)] The set $\mathcal{F}$, $\mathcal{F}=\{ \mbox{$x$=constant}\}$,
is invariant under the action of $F$. In other words, the $x$-coordinate of the
image $F(x_0,y_0)$ depends only on $x_0$.


\item[(F-2)] There are functions $f$ and $g$ such that $F$ can be written as%
\[
F(x,y)=(f(x),g(x,y))\text{ \ \ for }x\neq0
\]
and $F(-x,-y)=-F(x,y)$.

\item[(F-3)] $f^{\prime}(x)>\sqrt{2}$ for $x\neq0$ and $f^{\prime
}(x)\rightarrow\infty$ as $x\rightarrow0$; $0<f(r^{+})<r^{+}$ and
$r^{-}<f(r^{-})<0$ (recall that the unstable manifold of the origin first
intersects $V$ from above at points $\rho^{+}$ and $\rho^{-}$).

\item[(F-4)] $0<\partial g/\partial y\leq c<1/\sqrt{2}$ and $0<\partial
g/\partial x\leq c$ for $x\neq0$ and $\partial g/\partial
y\rightarrow0$ as $x\rightarrow0$  (see \cite[Section 14.4]{HSD04}).
Without loss of generality, $c$ can be assumed to be a rational
number and $\partial g/\partial y\rightarrow0$ to be monotonic as
$x\to 0$. \\
\end{itemize}

A consequence of (F-2)-(F-4) is that (see \cite[Section
14.4]{HSD04}): \\

\begin{itemize}
\item[(F-5)] $\lim_{x\rightarrow0^{-}}F(x,y)=(r^{+},t^{+})$ and $\lim
_{x\rightarrow0^{+}}F(x,y)=(r^{-},t^{-})$, where $\rho^{-}=(r^{+},t^{+})$ and
$\rho^{+}=(r^{-},t^{-})$. The symmetry property (F-2) implies that
$r^{-}<0<r^{+}$ and $r^{-}=-r^{+}$. \newline
\end{itemize}

\begin{figure}[ptb]
\begin{center}
\includegraphics[width=0.6\textwidth]{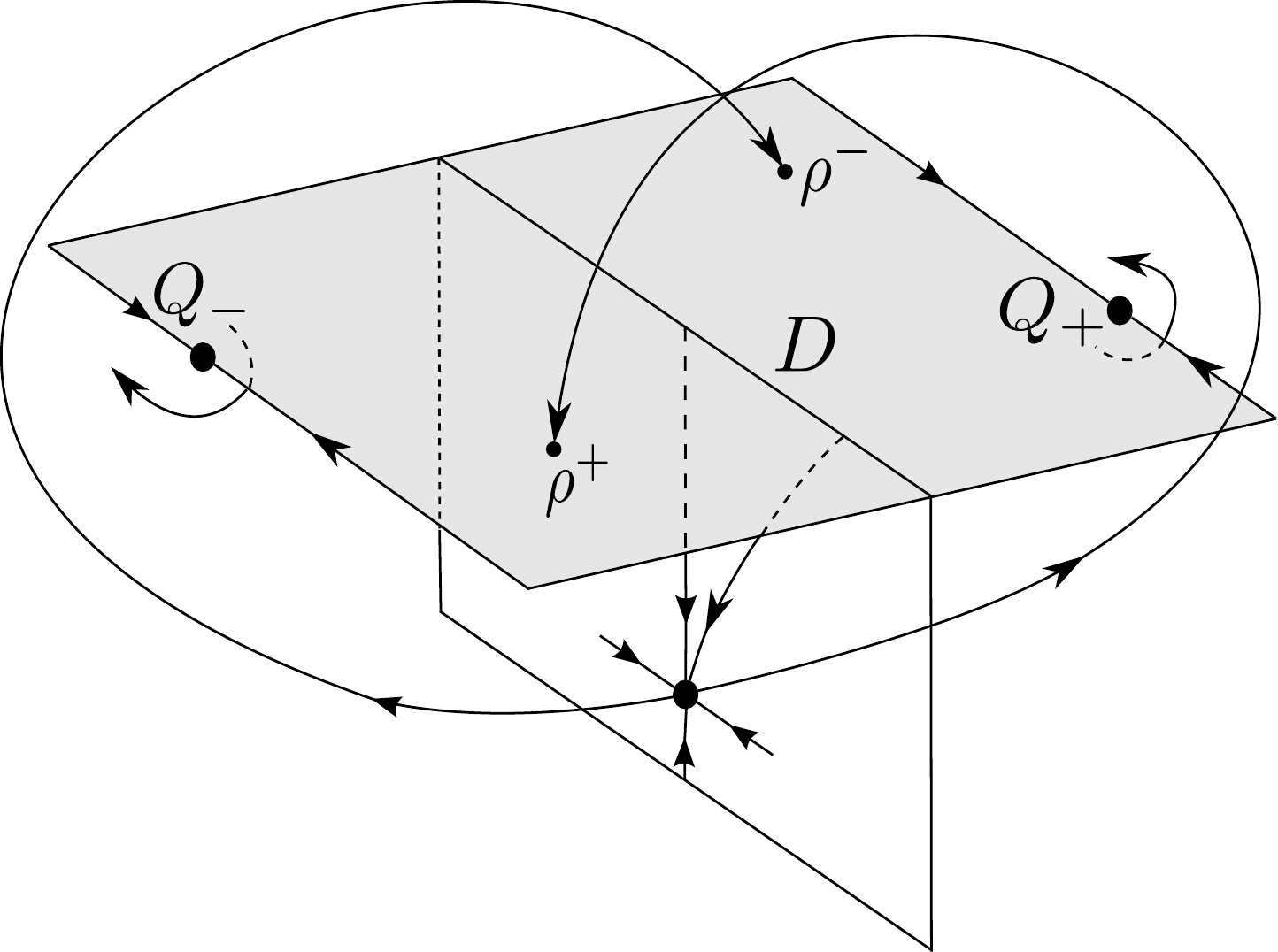}
\end{center}
\caption{The geometric model for the Lorenz system.}%
\label{Fig:GeometricLorenz}%
\end{figure}

The image of $\Sigma$ by $F$ is depicted in Figure \ref{Fig:GeometricLorenzPlane}.
\newline

\begin{figure}[ptb]
\begin{center}
\includegraphics[width=0.8\textwidth]{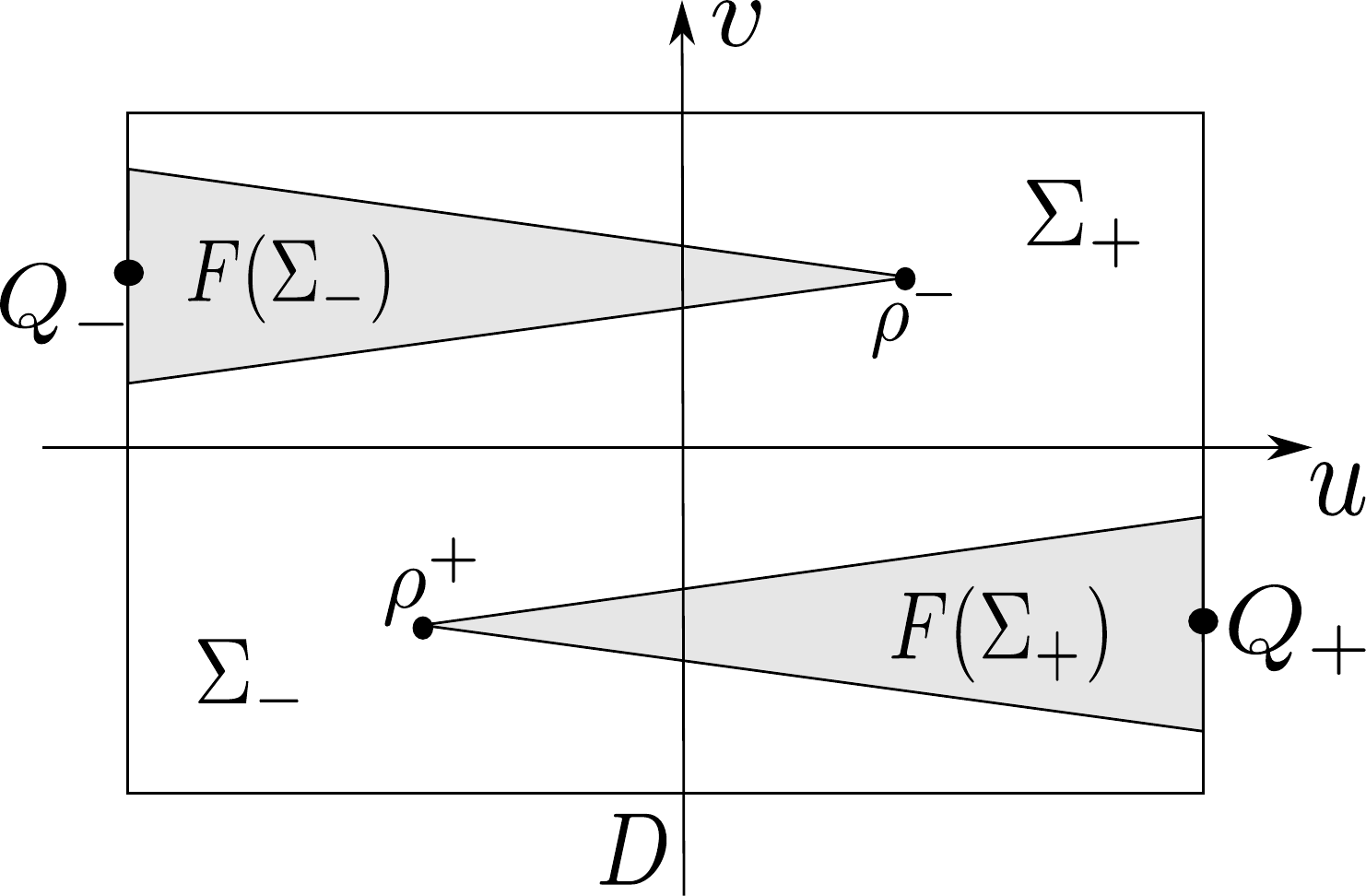}
\end{center}
\caption{The Poincar\'{e} map $F$ on the cross-section $\Sigma$.}%
\label{Fig:GeometricLorenzPlane}%
\end{figure}

Figure \ref{Fig:LorenzSolid} shows a picture of the flow, where $\Sigma$ is the
upper surface of the solid and the flow is tangent to the curved surfaces of
the solid and to the bottom segment. On the front and back surfaces, the flow
is into the solid while the trajectories emerge from the vertical ends. These
emergent trajectories are continued around so that $F$ describes the return
map for $V$. This flow, denoted by $\phi_{t}$, $t\in\mathbb{R}$ and acting on
$M=\{\phi_{t}(x,y,z) : (x,y,z) \in V \times \{27\},  t\in\mathbb{R}^{+}\}$ is called a
geometric Lorenz flow.

\begin{figure}[ptb]
\begin{center}
\includegraphics[width=0.8\textwidth]{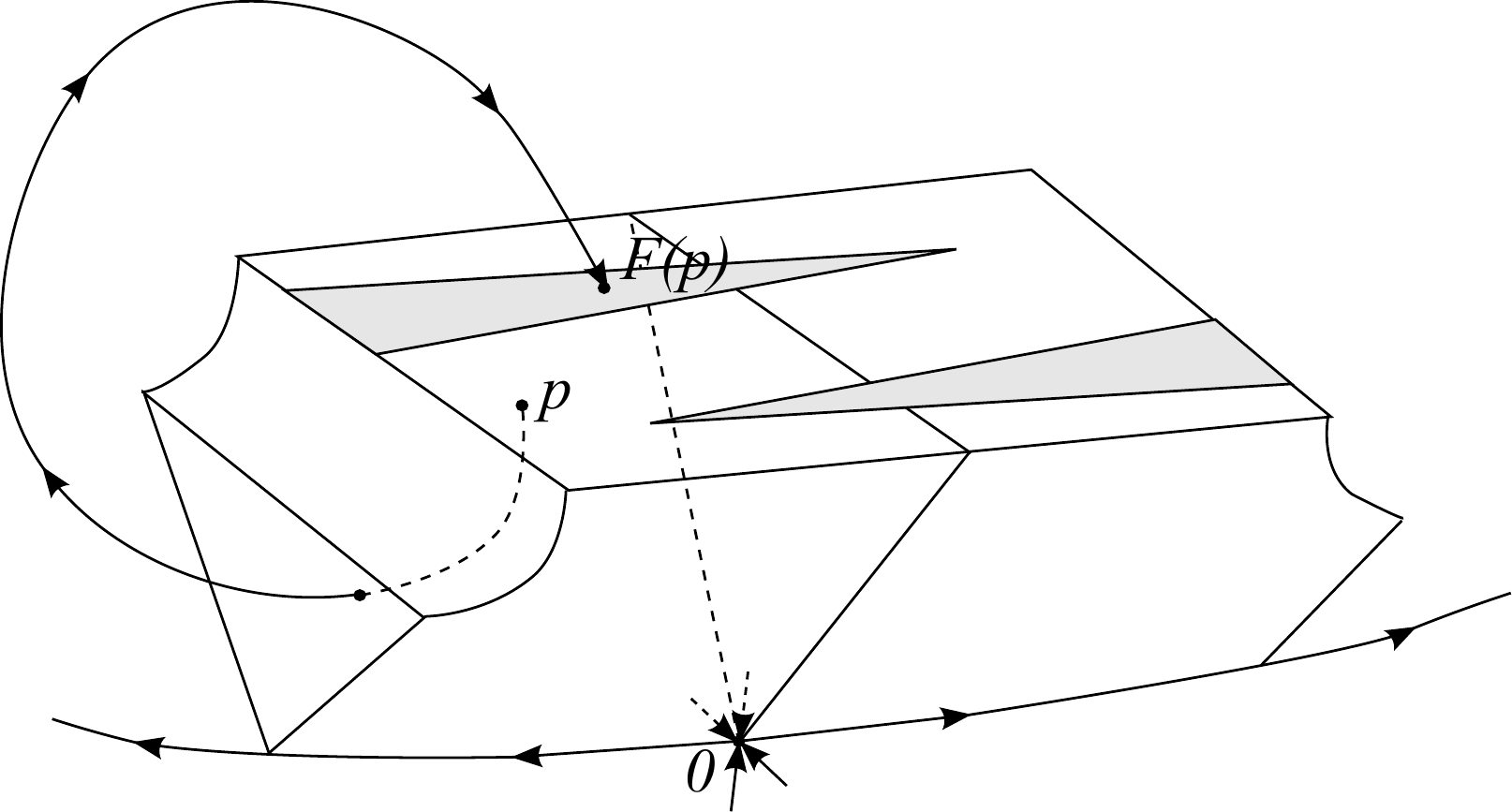}
\end{center}
\caption{A three-dimensional representation of the geometric Lorenz flow.}%
\label{Fig:LorenzSolid}%
\end{figure}


It is shown in \cite{GH83} that%
\[
A=%
{\displaystyle\bigcap\limits_{n\geq0}}
\overline{F^{n}(V\setminus D)}
\]
is the intersection of the attractor for the geometric Lorenz flow
with $V$
and that%
\[
\mathcal{A}=\left(
{\displaystyle\bigcup\limits_{t\in\mathbb{R}}}
\phi_{t}(A)\right)  \cup\{(0,0,0)\}
\]
is an attractor for the geometric Lorenz flow $\phi_{t}$; this
attractor is a Lorenz-like strange attractor. Note that $F$ is
defined on $V\setminus D$. Thus $F^2(V\setminus D)$ is understood as
of $F(F(V\setminus D)\setminus D)$ and, inductively,
$F^{n+1}(V\setminus D)=F(F^n(V\setminus D)\setminus D)$.

We mention in passing that the geometric Lorenz model is not unique; in fact,
any flow which satisfies the geometric conditions listed above contains a
Lorenz-like strange attractor, thus it is a geometric Lorenz flow. As usual,
one might also need to use some reparametrization of the model to ensure that
it behaves like described in this section (it has a fixed point on the origin, etc.).
All computability results stated in this paper are relative to that (eventual) reparametrization.

\begin{figure}[ptb]
\begin{center}
\includegraphics[width=0.6\textwidth]{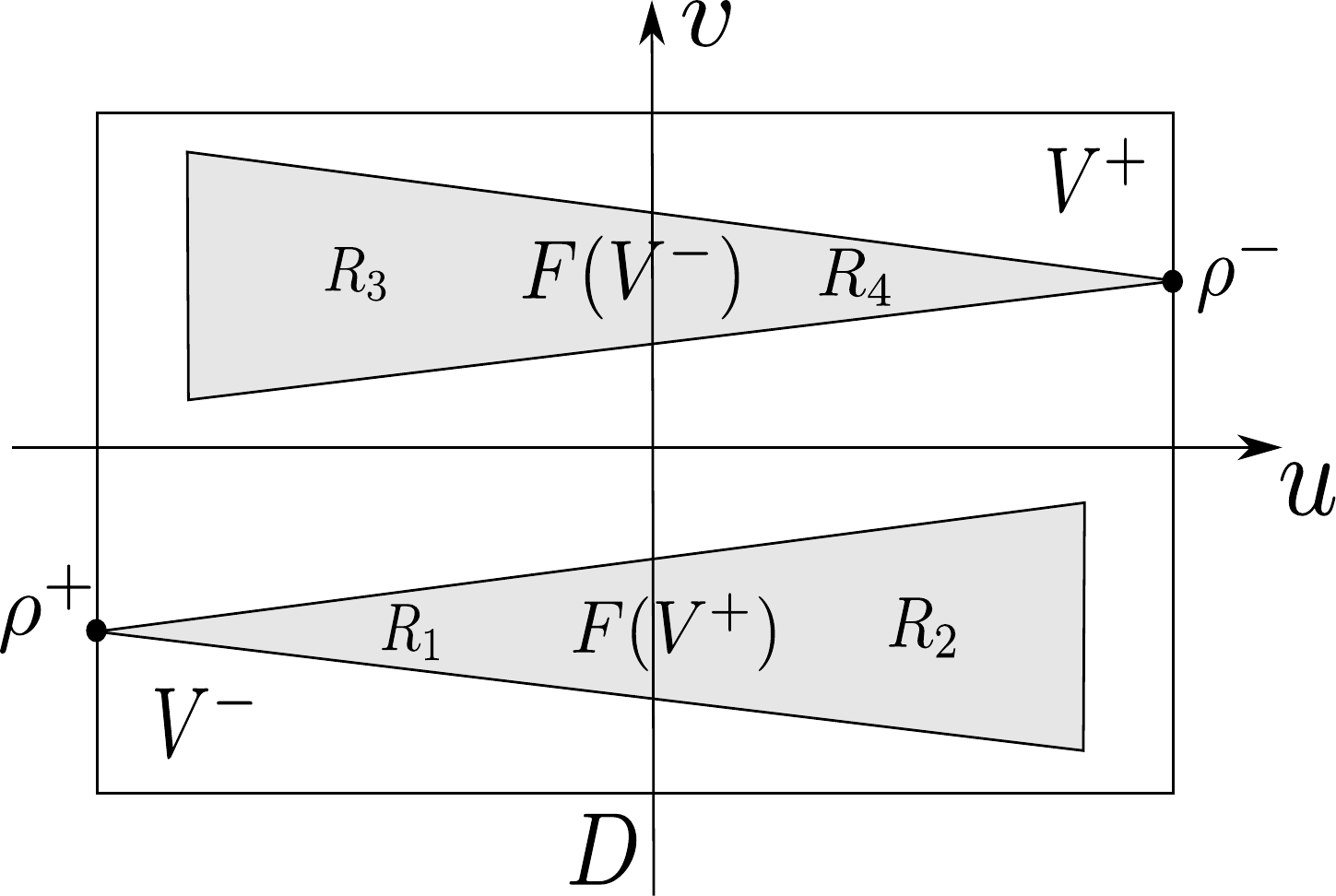}
\end{center}
\caption{The Poincar\'{e} map on the cross-section $V$.}%
\label{Fig:LorenzPlaneReduced}%
\end{figure}

\section{Computability of geometric Lorenz attractors}

\label{Sec:LorenzGeometricComputable}

In this section, we show that the strange attractor $\mathcal{A}$ contained in
a geometric Lorenz flow is uniformly computable from the data defining the
flow. Thus, if the data defining the flow is computable, then so is
$\mathcal{A}$; by definition this means that $\mathcal{A}$ can be visualized
on a computer screen with an arbitrary high magnification.

We begin by studying computability of the set $A$, for $\mathcal{A}$
consists of the trajectories passing through $A$. We start by
showing that $A$ is uniformly computable from $F$ and $\rho^{\pm}$.



\begin{theorem}
\label{Th:ReturnMapAttractor} The operation $(F, \rho^{\pm})\to A$
is computable.


\end{theorem}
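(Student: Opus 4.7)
The plan is to produce, uniformly in $(F,\rho^{\pm})$ and any precision $\varepsilon>0$, a finite set of rational points that is $\varepsilon$-Hausdorff close to $A$; this gives both an inner and an outer computable approximation in the sense of (2), hence computability of $A$ as a compact set. I work with the decreasing compact filtration $K_n:=\overline{F^n(V\setminus D)}$ from the definition of $A$ and aim to show (i) the explicit Hausdorff rate $d_H(K_n,A)\le 54\,c^n$, where $c<1/\sqrt 2$ is the computable rational from (F-4), and (ii) uniform computability of $K_n$ from $(F,\rho^{\pm})$.

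For (i), fix $w\in\{+,-\}^n$ and let $C_w=\{p\in V\setminus D:F^k(p)\in\Sigma_{w_k},\;0\le k<n\}$; by (F-1) and (F-2) this cylinder has product form $I_w\times[-27,27]$ for some $x$-interval $I_w$, and $F^n$ sends the vertical line $\{x=x_0\}\cap C_w$ onto the vertical line $\{x=f^n(x_0)\}\cap S_w$, where $S_w:=F^n(C_w)$, with $y$-separation contracted by a factor at most $c^n$ by the chain rule applied to $\partial g/\partial y$. Next I verify that each such vertical line meets $A$: the expansion $f'>\sqrt 2$ and the boundary estimates from (F-3) yield $\overline{f^n([r^-,r^+]\setminus\{0\})}=[r^-,r^+]$ for all $n$, and a fiberwise compactness argument (using that each $K_n$ is compact and the decreasing intersection is non-empty) gives $\pi_x(A)=[r^-,r^+]$. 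The same argument applied inside $C_w$ gives $\pi_x(A\cap C_w)=I_w$, so $\pi_x(A\cap S_w)=f^n(I_w)=\pi_x(S_w)$. Therefore, for any $p\in S_w$ we may pick $q\in A\cap S_w$ with $q_x=p_x$, which forces $|p-q|=|p_y-q_y|\le 54\,c^n$ by the contraction bound.

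For (ii), observe that by (F-5) together with the monotone convergence $\partial g/\partial y\to 0$ from (F-4), the two branches $F|_{\Sigma_\pm\cap V}$ extend continuously to the closed half-rectangles by setting $\bar F_\pm(0,y)=\rho^{\pm}$, with a computable modulus of continuity; hence $\bar F_\pm$ are computable from $(F,\rho^{\pm})$. The identity
\[
K_n=\bar F_+\!\bigl(K_{n-1}\cap\{x\ge 0\}\bigr)\cup\bar F_-\!\bigl(K_{n-1}\cap\{x\le 0\}\bigr),\qquad K_0=V,
\]
holds because each strip of $K_{n-1}$ is the closure of its intersection with $\{x\ne 0\}$ (preimages of $0$ under $f^{n-1}$ lie in the $x$-projection of a full-dimensional strip and are approached from both sides). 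Since intersection with a computable closed half-space and direct image under a computable continuous map both preserve computable compactness in this full-dimensional setting, induction on $n$ yields uniform computability of $K_n$.

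Combining (i) and (ii): given $\varepsilon$, pick $n$ with $54\,c^n<\varepsilon/2$ and compute an $(\varepsilon/2)$-Hausdorff approximation to $K_n$, giving an $\varepsilon$-approximation to $A$. The main obstacle will be (i), specifically showing that every admissible symbolic cylinder is populated by $A$; this amounts to verifying that the factor dynamics $f$ acts like a full 2-shift on $[r^-,r^+]$, which relies essentially on the boundary positions $0<f(r^+)<r^+$ and $r^-<f(r^-)<0$ in (F-3) together with the monotonicity of $f$ on each branch.
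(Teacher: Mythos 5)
Your plan follows the same skeleton as the paper's proof: split $F$ into two branches, extend them continuously to $D$ via (F-5) (the paper does this with Lemma \ref{Lm:Patching}), iterate on compact sets, establish a geometric Hausdorff rate in $c^n$, and pass to the limit. Your rate estimate (i) is a legitimate variant: the paper instead bounds the Hausdorff distance between consecutive sets $A_n$, $A_{n+1}$ by $108c^n$ via one-step preimages (using that $f$ maps $I\setminus\{0\}$ onto $(r^-,r^+)$) and then applies Lemma \ref{Lm:Weihrauch}(d)--(e), whereas you bound the distance from $K_n$ to $A$ directly by $54c^n$ using cylinders; your version additionally needs the forward invariance $F(A\setminus D)\subseteq A$ (to know $F^n(q')\in A$), which you do not state but which follows from the recursion $A_{n+1}=\overline{F(A_n\setminus D)}$, so this part is fillable and essentially parallels the paper.

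The genuine gap is in (ii). You build $K_n$ by first forming $K_{n-1}\cap\{x\ge 0\}$ and $K_{n-1}\cap\{x\le 0\}$ as computable compact sets and then taking images under $\bar F_{\pm}$. But intersection of a computable compact set with a closed half-plane is \emph{not} a computable operation: the outer (co-r.e.) information passes to the intersection, while a dense sequence of the intersection is in general not computable from a name of $K_{n-1}$, and Lemma \ref{Lm:Weihrauch}(a) requires a full name (inner and outer) of the set whose image is taken. Your parenthetical justification --- that points of $K_{n-1}$ on $D$ are ``approached from both sides'' --- is precisely the unproved point: nothing in (F-1)--(F-5) prevents the $x$-projection $f^{n-1}(I_w)$ of some strip from terminating exactly at $0$, in which case its points on $D$ are approached within that strip only from one side, and no other strip need supply approximants with matching $y$-coordinates from the other side; so the one-sided density you need (that $K_{n-1}\cap\{x\ge0\}$ is the closure of $K_{n-1}\cap\{x>0\}$) is neither obvious nor established. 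The paper avoids the intersection entirely: the inner name of $\alpha(K)=F^+(K\cap V^+)\cup F^-(K\cap V^-)\cup\{\rho^-,\rho^+\}$ is computed by filtering the dense sequence of $K$ through the r.e. open set $V\setminus D$ (membership in $\{x>0\}$ or $\{x<0\}$ is semi-decidable), applying $F$ there, and adjoining the two points $\rho^{\pm}$ by hand --- legitimate because $\rho^{\pm}\in A_n$ for every $n$, which is where the surjectivity of $f$ enters; the outer name comes from standard results on images of compact sets, which need only the outer data. Replace your intersection step by this device (or supply a proof of the one-sided density property you assert), and the rest of your argument goes through.
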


\begin{proof}
We show that $A$ is computable by making use of the
``constructive" definition of $A$, $A=\bigcap_{n\geq0}\overline{F^{n}%
(V\setminus D)}$. Let $A_{n}=\overline{F^{n}(V\setminus D)}$. In
Propositions \ref{independence} and \ref{claim-3} below, we show
that
\begin{enumerate}
\item[i)] the sequence $\{ A_{n}\}$ is computable from $F$ and $\rho^{\pm}$ and
\item[ii)] $\max_{(x, y)\in V}|d_{A_{n+1}%
}(x,y)-d_{A_{n}}(x,y)|\leq 108c^{n}$ (see (F-4) for definition of the
number $c$).
\end{enumerate}

Then it follows from (d) and (e) of Lemma \ref{Lm:Weihrauch} that
$A$ is computable from $F$ and $\rho^{\pm}$ since $A_{n+1}\subseteq
A_{n}$.  Propositions \ref{independence} and \ref{claim-3}, together
with their proofs, are presented below.
\end{proof}

\medskip

We will need the following lemmas, which will be also needed in several later proofs. Recall that
an operation $\mathcal{O}: X \to Y$ is computable if there is a
Turing algorithm that, for any given name of $x$ as input, outputs a
name of $\mathcal{O}(x)$.

\begin{lemma}
\label{Lm:Patching} The operation $(f, b, \beta)\to f_{1}$ is computable,
where $f: (0, b]\to\mathbb{R}$ is a continuous function with the following properties:
(i) it is monotonic on $(0, b]$ and (ii) $f(x)\to\beta$ as
$x\to0^{+}$, and
\[
f_{1}(x)=\left\{
\begin{array}
[c]{ll}%
f(x) & x\in(0, b]\\
\beta & x=0
\end{array}
\right.
\]
Similarly, the operation $(f, a, \beta)\to f_{2}$ is computable, where $f: [a,
0)\to\mathbb{R}$ is a continuous function, with the following properties: (i) it
is monotonic on $[a,0)$ and (ii) $f(x)\to\beta$ as
$x\to0^{-}$, and
\[
f_{2}(x)=\left\{
\begin{array}
[c]{ll}%
f(x) & x\in[a, 0)\\
\beta & x=0
\end{array}
\right.
\]

\end{lemma}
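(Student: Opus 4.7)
The plan is to produce, from a name of $f$ in the sense of representation (3) together with rational approximations of $b$ and $\beta$, a sequence of polynomials $\{P_{k,l}\}$ with rational coefficients satisfying $d_k(P_{k,l}, f_1) < 2^{-l}$ on $[0,b]$. The only real technical content lies in extending the approximation across the point $0$, because the name of $f$ provides polynomial approximations only on intervals of the form $[2^{-k}, b-2^{-k}]$ that are bounded away from $0$. Monotonicity of $f$ together with the limit hypothesis $f(x)\to\beta$ supplies an effective modulus for this crossing.

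Concretely, I would first use the name of $f$ to evaluate $f$ at the rational points $2^{-m}$ (for each $m\geq 1$ the point $2^{-m}$ lies inside $[2^{-m-1},b-2^{-m-1}]$, so $f(2^{-m})$ can be approximated by the $P_{k,l}$'s to any desired precision). Given a target precision $2^{-l}$, I would search through $m=1,2,\ldots$ and compare the approximation of $f(2^{-m})$ with a $2^{-l-2}$-approximation of $\beta$; by the limit hypothesis the search terminates, yielding a rational $\delta=2^{-m(l)}>0$ with $|f(2^{-m(l)})-\beta|<2^{-l-1}$. Monotonicity of $f$ on $(0,b]$ then automatically upgrades this to $|f(x)-\beta|<2^{-l-1}$, hence $|f_1(x)-\beta|<2^{-l-1}$, for every $x\in[0,\delta]$. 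Next, on the compact interval $[\delta,b]$ the name of $f$ produces a rational polynomial $R$ with $\|R-f\|_\infty<2^{-l-2}$. Gluing $R$ on $[\delta,b]$ with the constant $\beta$ on $[0,\delta/2]$ via a rational linear interpolation on $[\delta/2,\delta]$ yields a continuous piecewise function $\tilde f_l$ on $[0,b]$ with $\|\tilde f_l-f_1\|_\infty<2^{-l-1}$. Finally, a Bernstein-polynomial (Weierstrass) approximation of $\tilde f_l$ within $2^{-l-1}$ with rational coefficients produces the desired $P_{k,l}$ with $\|P_{k,l}-f_1\|_\infty<2^{-l}$ on all of $[0,b]$, which in particular gives $d_k(P_{k,l},f_1)<2^{-l}$. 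All the steps above are performed by Turing algorithms operating uniformly on the input names of $(f,b,\beta)$, establishing computability of the operation.

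The main obstacle is not conceptual but rather the careful bookkeeping of precisions across the three error contributions — locating $\delta$, approximating $f$ on $[\delta,b]$, and polynomially approximating the glued function — and verifying that the search for $\delta$ is uniformly effective; monotonicity is essential precisely because it converts the pointwise limit into a uniform bound that validates the constant $\beta$ as an approximation on an entire neighborhood of $0$. The second operation, extending $f:[a,0)\to\R$ by $f_2(0)=\beta$, is handled by the same argument after the reflection $x\mapsto -x$, so no additional work is needed.
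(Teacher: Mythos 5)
The paper itself omits the proof of this lemma (``straightforward, thus omitted''), so there is nothing to compare line by line; your argument is a correct realization of what the omitted proof has to be, and it isolates the right idea: the hypotheses of monotonicity and knowledge of $\beta$ are exactly what turn the pointwise limit $f(x)\to\beta$ into an \emph{effective} modulus at $0$ --- search for $m$ with $|f(2^{-m})-\beta|$ small, and monotonicity promotes this to a uniform bound on all of $(0,2^{-m}]$ --- after which gluing the constant $\beta$ to a polynomial approximant of $f$ away from $0$ and re-approximating (Bernstein) gives a name of $f_1$. The precision bookkeeping you defer (the linear-interpolation zone picks up two error contributions, so the tolerances should be taken a notch tighter, e.g. $2^{-l-3}$) is routine. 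One genuine wrinkle you should address: under the paper's representation (3), a name of $f$ on $(0,b]$ only controls $f$ on the pinched intervals $[2^{-k},\,b-2^{-k}]$, i.e. it is also uninformative near the right endpoint $b$, so your claim that the name yields a polynomial $R$ with $\|R-f\|_\infty<2^{-l-2}$ on all of $[\delta,b]$ is not literally available, and a sup-norm approximation of $f_1$ on the full interval $[0,b]$ cannot be computed at $b$ from such a name alone. This is harmless rather than fatal: if the output is also given in representation (3), the target condition $d_k(P_{k,l},f_1)<2^{-l}$ only involves $[2^{-k},\,b-2^{-k}]$, so you simply run your gluing construction on $[\delta,\,b-2^{-k}]$ (where the input name does give sup-norm control) and the rest of your argument goes through unchanged; alternatively, if one wants genuine sup-norm names on $[0,b]$ (as needed to feed $f^{\pm},g^{\pm}$ into Lemma \ref{Lm:Weihrauch}(a) later), one should note that in the intended application the singularity sits only at $x=0$ and the given data control $f$ up to the other endpoint, so only the endpoint $0$ requires your monotonicity argument. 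State which reading you use and adjust the interval accordingly; with that fixed, the proof is complete, and your reduction of the $f_2$ case by reflection is fine.
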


\begin{proof}
The proof is straightforward, thus omitted.
\end{proof}

\begin{lemma}
\label{Lm:Weihrauch} The following results can be found in Chapters
5 \& 6 \cite{Wei00}. Assume that $A$, $B$, and $K$ are subsets of
$\mathbb{R}^n$.

\begin{itemize}
\item[(a)] The operation $(f, K)\to f[K]$ for continuous $f$ and compact $K$
is computable.

\item[(b)] The union $(A, B)\to A\bigcup B$ of compact sets is computable.

\item[(c)] The operation $(f, a, b)\to(f^{-1}, c, d)$ for continuous strictly monotonic
$f$ is computable, where $f^{-1}$ is the inverse of $f$ and $f[a, b]=[c, d]$.

\item[(d)] The operation $((f_{i})_{i\in\mathbb{N}}, K)\to f$, $f(x)=\lim
_{i\to\infty}f_{i}(x)$, is computable, where $\max_{x\in K}|f_{i}%
(x)-f_{j}(x)|\leq C\cdot c^{i}$ for all $j>i$, $K$ is compact, $C$ is a
rational number, and $0<c<1$.

\item[(e)] The operations $K\to d_{K}$ and $d_{K}\to K$ for non-empty compact sets $K$
are computable, where $d_{K}$ is the distance function defined on $K$:
$d_{K}(x)=dist (x, K)$.
\end{itemize}
\end{lemma}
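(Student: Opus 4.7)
The plan is to treat Lemma \ref{Lm:Weihrauch} as a compendium of standard computable-analysis facts and, as its statement already indicates, to rest the formal argument on \cite[Chapters 5--6]{Wei00}; what I sketch here is the algorithmic content of each part, starting from the representations of Section \ref{Subsec:comptanalysis}, so that the way in which the lemma will be invoked in Theorem \ref{Th:ReturnMapAttractor} becomes transparent.

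For (a), I would feed an inner-name $\{k_i\}$ of $K$ through $f$ to obtain $\{f(k_i)\}$ as an inner-name of $f[K]$, and combine a computable modulus of uniform continuity of $f$ on $K$ (extracted from the polynomial-approximation name of $f$) with the outer-name of $K$ to semi-decide which rational balls miss $f[K]$. For (b), interleaving inner-names of $A$ and $B$ gives an inner-name of $A\cup B$, while the outer-name exploits $(A\cup B)^{c}=A^{c}\cap B^{c}$ by enumerating rational balls covered (to any rational tolerance) by a ball from each of the two outer enumerations. For (c), bisection on $[a,b]$ locates $f^{-1}(y)$ to any desired precision: at each step the sign of $f(\text{midpoint})-y$ is semi-decidable from approximations of $f$, and strict monotonicity guarantees that exactly one subinterval is discarded in finite time.

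For (d), the uniform bound $\max_K|f_i-f_j|\leq C c^{i}$ with rational $C$ and $c<1$ makes the convergence effective: to output $f(x)$ to precision $2^{-l}$, it suffices to evaluate a sufficiently fine approximation of $f_i(x)$ for the smallest $i$ with $Cc^{i}<2^{-l-1}$. Finally, for (e), $d_K(x)$ is approximated from above by $\min_{i\leq N}|x-k_i|$ along the inner-name of $K$, and from below using the outer-name (any rational ball around $x$ contained in a complement-ball certifies a lower bound); conversely, from a computable $d_K$ one obtains an inner-name by enumerating rational points at which $d_K$ is small and an outer-name by enumerating rational balls $B(a,r)$ with $d_K(a)>r$.

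The only genuine obstacle is notational: one must verify that these representations interact correctly with the operations, e.g.\ that the strict inequality $d_K(a)>r$ in the outer-name in (e) is semi-decidable from a computable $d_K$ (it is, from one-sided rational approximations of $d_K(a)$), and that (a) really produces an outer-name rather than merely a sequence dense in the complement. The conceptual point for the application in Theorem \ref{Th:ReturnMapAttractor} is the duality $K\leftrightarrow d_K$ in (e) combined with the geometric-rate limit criterion in (d): together they convert a uniformly computable, Hausdorff-Cauchy decreasing sequence of compact sets into a computable compact limit, which is precisely the mechanism needed to realise $A=\bigcap_n A_n$ as computable.
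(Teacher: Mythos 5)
The paper itself gives no proof of this lemma beyond the citation to Chapters 5--6 of \cite{Wei00}, and you likewise defer to that citation, so at the formal level you match the paper; the algorithmic sketches you add are the standard ones, and your closing observation that (d) and (e) are what convert the effectively Hausdorff--Cauchy decreasing sequence $A_n$ into a computable limit is exactly how the lemma is used in Theorem \ref{Th:ReturnMapAttractor}. Two of your sketches, however, need repairs if they are meant to stand as proofs. In (c), single-midpoint bisection can stall: the sign of $f(m)-y$ is only semi-decidable when it is nonzero, and strict monotonicity does not prevent $f(m)=y$ exactly; the standard fix is to test two interior rational points $m_1<m_2$ in parallel (whether $f(m_1)<y$ or $f(m_2)>y$): strict monotonicity guarantees at least one test halts, and either outcome discards a definite fraction of the interval. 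In (e), certifying a lower bound on $d_K(x)$ by finding a single listed complement ball containing a ball around $x$ is not enough: the exhaustion of $\mathbb{R}^n\setminus K$ may consist only of tiny balls, so even when $d_K(x)>q$ no single listed ball need contain $B(x,q)$, and the supremum of your certificates can badly undershoot $d_K(x)$. Instead one semi-decides, using compactness, that the closed ball $\overline{B(x,q)}$ is covered by \emph{finitely many} of the listed complement balls; this certifies $d_K(x)>q$ and yields arbitrarily tight lower bounds. Similarly, for the direction $d_K\to K$, rational points where $d_K$ is small are merely near $K$, not in it, so to produce a sequence dense in $K$ you should refine such points iteratively into Cauchy sequences (with effective rate) converging to points of $K$. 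With these adjustments your sketches are correct renditions of the cited facts.
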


\begin{proposition} \label{independence}
$\{A_{n}\}$ is computable.
\end{proposition}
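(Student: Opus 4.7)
The plan is to decompose $A_n = \overline{F^n(V \setminus D)}$ into at most $2^n$ pieces indexed by symbolic itineraries in $\{+,-\}^n$, and to realize each piece as the image of a computable compact rectangle under a composition of computable continuous extensions of $F$. The computability of $\{A_n\}$ will then follow from Lemma~\ref{Lm:Weihrauch}(a) and (b), uniformly in $n$.

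The first step is to extend $F$ across the singular segment $D$. Using Lemma~\ref{Lm:Patching} together with the limits in (F-5) and the monotonicity hypotheses of (F-3) and (F-4), I would build computable continuous extensions $\tilde F_+:V_+\to V$ and $\tilde F_-:V_-\to V$, where $V_{\pm}=\{(x,y)\in V:\pm x\geq 0\}$, with $\tilde F_+(0,y)=\rho^+=(r^-,t^-)$ and $\tilde F_-(0,y)=\rho^-=(r^+,t^+)$. In (F-5) the limits are independent of $y$, so the one-dimensional Lemma~\ref{Lm:Patching} can be applied to $f$ on each half-interval and to $g(\cdot,y)$ uniformly in $y$.

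Next, for each itinerary $s=(s_1,\ldots,s_n)\in\{+,-\}^n$ I would define a closed interval $J_s\subseteq[r^-,r^+]$ recursively by $J_{(+)}=[0,r^+]$, $J_{(-)}=[r^-,0]$, and $J_{(\sigma,s')}=J_{(\sigma)}\cap f^{-1}(J_{s'})$, where the branch of $f^{-1}$ is chosen on the side $\sigma$. By (F-3), $f$ is strictly monotonic on each side of $0$, so Lemma~\ref{Lm:Weihrauch}(c) yields $f^{-1}$ computable, and the endpoints of $J_s$ --- lying among $\{r^-,r^+,0\}$ and backward iterates $f^{-k}(0)$ --- are computable uniformly in $s$ and $n$. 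Setting $R_s=J_s\times[-27,27]$ and $\Phi_s=\tilde F_{s_n}\circ\cdots\circ\tilde F_{s_1}$, the construction of $J_s$ guarantees that each intermediate image $\tilde F_{s_k}\circ\cdots\circ\tilde F_{s_1}(R_s)$ lies in $V_{s_{k+1}}$, so $\Phi_s:R_s\to V$ is well-defined, continuous, and computable uniformly in $s$ and $n$.

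The crux of the argument is the identity
\[
A_n = \bigcup_{s\in\{+,-\}^n}\Phi_s(R_s).
\]
The inclusion $\subseteq$ is obtained by assigning each orbit piece its sign itinerary and observing that closure points are captured by the continuous extension on $\partial R_s$; the inclusion $\supseteq$ follows by writing any $\Phi_s(q)$ as a limit of $\Phi_s$ evaluated at interior points of $R_s$, where $\Phi_s$ coincides with a genuine $F^n$-iterate of a point in $V\setminus D$ and hence lies in $A_n$. Given this identity, Lemma~\ref{Lm:Weihrauch}(a) makes each $\Phi_s(R_s)$ a computable compact set uniformly in $s$ and $n$, and Lemma~\ref{Lm:Weihrauch}(b) yields the finite union $A_n$ as a computable compact set uniformly in $n$. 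The main obstacle I expect is verifying the boundary behavior in the displayed identity: one must check that $\Phi_s$ applied at a point of $R_s$ whose forward $F$-orbit hits $D$ produces precisely the correct one-sided limit of true $F^n$-iterates, and nothing extraneous. This relies on the constant limits in (F-5) together with the choice of the extensions $\tilde F_{\pm}$; the remainder is a routine continuity and compactness argument.
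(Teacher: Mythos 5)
Your first step (extending $F$ to the two closed halves $V^{\pm}$ via Lemma \ref{Lm:Patching} and (F-5)) is exactly the paper's first move, but your second step---decomposing $A_n$ into the $2^n$ monotonicity cylinders $R_s$ and invoking Lemma \ref{Lm:Weihrauch}(a),(b)---is a different route, and it contains two genuine gaps. First, the uniform computability of the cylinders as compact sets is not established. Since $f$ is not a full shift (which itineraries are realized is constrained by the orbits of $\rho^{\pm}$), many $J_s$ are empty; from the computable candidate endpoints $u_s,v_s$ of $J_s$ one can semi-decide $u_s<v_s$ and semi-decide $u_s>v_s$, but one cannot decide which holds, and the case $u_s=v_s$ is undetectable. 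A name of a compact set in the representation used here includes a sequence \emph{dense} in the set, so ``form $R_s=J_s\times[-27,27]$ and apply Lemma \ref{Lm:Weihrauch}(a)'' is not yet an algorithm: you must say how to produce, uniformly in $s$ and $n$, names of sets that may be empty or degenerate without being able to tell which. This can be repaired (run both semi-tests in parallel and prove that degenerate cylinders contribute nothing that is not already in the closure of the images of the nondegenerate ones), but that is a substantive argument, not a routine application of (a) and (b); note also that the recursion defining $J_s$ through branches of $f^{-1}$ must be formulated so that it behaves sensibly when an intermediate intersection is already empty.

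Second, your proof of $\bigcup_s\Phi_s(R_s)\subseteq A_n$ by approximating from the interior of $R_s$ only works when $J_s$ has nonempty interior. A point of $J_s$ whose forward orbit meets $D$ is necessarily an endpoint of $J_s$ (by strict monotonicity of the extended branches), so when $J_s$ is nondegenerate your argument is fine; but when $J_s$ is a single point (which cannot be excluded a priori) there are no interior points to approximate from, and every point of such a cylinder has an orbit hitting $D$, so $\Phi_s$ passes through $\rho^{\pm}$. To place the resulting point in $A_n$ you need the separate fact that $\rho^{-},\rho^{+}\in A_m$ for every $m$; this is not a continuity statement---in the paper it is deduced from the surjectivity of $f$ onto $(r^-,r^+)$ given by (F-3), together with (F-4) and (F-5). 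Without that ingredient the ``routine continuity and compactness argument'' does not close. For comparison, the paper sidesteps both issues by iterating a single computable operator $\alpha(K)=F^+(K\cap V^+)\cup F^-(K\cap V^-)\cup\{\rho^-,\rho^+\}$ on closed subsets of $V$, checking computability separately on inner and outer names; explicitly adjoining $\rho^{\pm}$ is what absorbs the boundary behavior, and no decision about which itineraries are realized is ever required.
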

\begin{proof}
Let us first give an idea on how to compute the sequence $\{
A_{n}\}$ from $F$ and $\rho^{\pm }$. Intuitively one may attempt to
compute $F^{n}(V\setminus D)$ directly and then compute its closure.
However, since $F^{n}(V\setminus D)$ is neither open nor closed, it
cannot be computed from $F^{n}$, although $F^{n}$ is computable from
$F$ for every $n\in\mathbb{N}$. A possible solution is to extend $F$
to be defined on $V$ and then compute $F^{n}(V)$. But this method
also fails to work here -- $F$ is singular along $D\subseteq V$ and therefore it
cannot be extended to a continuous function on $V$. Nevertheless, we
observe that if we break $F$ into two half functions, each of them
 defined on one half of $V\setminus D$,  then we can extend each
half function continuously to be also defined on $D$ (from property
(F-5)); moreover, the extension is computable from the given data.
And so if we can show that the iterations of the two half functions
yield $\overline{F^{n}(V\setminus D)}$ and are computable from $F$
and $\rho^{\pm}$, we have the desired sets $A_{n}$.

Now for the details. Recall that $V=\{(x,y)|r^{-}\leq x\leq r^{+}, \, -27\leq
y\leq27\}$. Let $V^{+}=\{(x,y)\in V\,|\,x\geq0\}$ and $V^{-}=\{(x,y)\in
V\,|\,x\leq0\}$; let $I^{+}=\{x \, | \,0\leq x\leq r^{+}\}$, $I^{-}=\{x \, \,
|r^{-}\leq x\leq0\}$, and $I=[r^{-},r^{+}]$. It is then clear that $V$,
$V^{+}$, $V^{-}$, $I$, $I^{+}$, and $I^{-}$ are all computable from $r^{\pm}$.
Also recall that $F(x, y)=(f(x), g(x, y))$ on $V\setminus D$, $D=\{ (0, y) \,
| \, -27\leq y\leq27\}$.
Define

\begin{align*}
f^{+}:I^{+}\rightarrow I,\quad f^{+}(x)  &  =\left\{
\begin{array}
[c]{ll}%
f(x) & 0<x\leq r^{+}\\
r^{-} & x=0
\end{array}
\right.\\
f^{-}:I^{-}\rightarrow I,\quad f^{-}(x)  &  =\left\{
\begin{array}
[c]{ll}%
f(x) & r^{-}\leq x<0\\
r^{+} & x=0
\end{array}
\right.\\
g^{+}:V^{+}\rightarrow [-27,27],\quad g^{+}(x,y)  &=\left\{
\begin{array}
[c]{ll}%
g(x,y) & 0<x\leq r^{+}\\
t^{-} & x=0
\end{array}
\right.\\
g^{-}:V^{-}\rightarrow [-27,27],\quad g^{-}(x,y)  &=\left\{
\begin{array}
[c]{ll}%
g(x,y) & r^{-} \leq x< 0\\
t^{+} & x=0
\end{array}
\right.
\end{align*}
By (F-3) and (F-5) (that is, $f^{\prime}(x)>\sqrt{2}$ for $x\neq0$, $\lim
_{x\to0^{-}}F(x, y)=(r^{+},t^{+})$ and $\lim_{x\to0^{+}}F(x, y)=(r^{-},t^{-}%
)$), it follows that $f(x) \nearrow r^{+}$ when $x\to 0^-$  and that  $f(x)\searrow r^{-}$ as $x\to 0^+$.
Then it follows from Lemma \ref{Lm:Patching} that
both $f^{+}$ and $f^{-}$ are computable. A similar argument shows that $g^+,g^-$ are computable.

Let $F^{\pm}(x, y)=(f^{\pm}(x), g^{\pm}(x, y))$, $(x, y)\in
V^{\pm}$, where $f^{\pm}$ ($g^{\pm}$) is either $f^{+}$  or $f^{-}$
($g^+$ or $g^-$). Then $F^{\pm}$ is computable from $f^{\pm}$ and
$g^{\pm}$; thus it is computable.


Now, recall that  $A_n=\overline{F^n(V\setminus D)}$ by definition. It
can be shown that
\begin{equation} \label{A-n-description}
A_n=\overline{F(A_{n-1}\setminus D)}, \quad n\geq 1
\end{equation}
by induction as follows: Let $A_0=V$. Then
$A_1=\overline{F(V\setminus D)}=\overline{F(A_0\setminus D)}$.
Assume that $A_n=\overline{F(A_{n-1}\setminus D)}$. We show that
$A_{n+1}=\overline{F(A_n\setminus D)}$. By definition,
\begin{eqnarray*}
A_{n+1} & = & \overline{F^{n+1}(V\setminus D)} \\
& = & \overline{F(F^n(V\setminus D)\setminus D)} \\
& \subseteq &
\overline{F\left(\overline{F^n(V\setminus D)}\setminus D\right)} \\
& = &  \overline{F(A_n\setminus D)}.
\end{eqnarray*}
It remains to show that $\overline{F(A_n\setminus D)}\subseteq
A_{n+1}$. Since $A_{n+1}$ is closed (in $\mathbb{R}^2$), it suffices
to show that $F(A_n\setminus D)\subseteq A_{n+1}$. Since
$F(A_n\setminus D)=F(A_n\cap (V\setminus D))$, $A_n\cap(V\setminus
D)$ is closed in $V\setminus D$, and $F$ is continuous on
$V\setminus D$, it then follows that
\begin{eqnarray*}
F(A_n\setminus D) & = & F(A_n\cap (V\setminus D)) \\
& = & F\left(\overline{F^n(V\setminus D)}\cap (V\setminus D)\right) \\
& \subseteq & \overline{F(F^n(V\setminus D)\cap (V\setminus D))} \\ 
& = & A_{n+1}. \\ 
\end{eqnarray*}
Therefore $A_{n+1}=\overline{F(A_n\setminus D)}$.

We also note that, following (F3), $f: I\setminus \{ 0\}\to
I\setminus \{ r^+, r^-\}$ is a surjective map. Combining this fact
with (F4) and (F5) we conclude that
\begin{equation} \label{A-n-rho}
\rho^-, \, \rho^+\in A_n, \quad n\geq 0.
\end{equation}


It now  follows from
(\ref{A-n-description}) and (\ref{A-n-rho}) that
\begin{equation} \label{A-n-V}
A_n=\overline{F(A_{n-1}\setminus D)}=F^+(A_{n-1}\cap V^+)\bigcup
F^-(A_{n-1}\cap V^-).
\end{equation}
Let $\mathcal{C}=\{ K\subseteq V \, | \, \mbox{$K$ is closed in
$V$}\}$ and let $\alpha$ be a map from $\mathcal{C}$ to
$\mathcal{C}$ defined by the following formula: for each
$K\in\mathcal{C}$,
\begin{equation} \label{alpha-1}
\alpha (K)=F^+(K\cap V^+)\bigcup F^-(K\cap V^-)\bigcup \{ \rho^-,
\rho^+\}.
\end{equation}
Note that $F(K\cap (V\setminus D))=F^+(K\cap (V^+\setminus
D))\bigcup F^-(K\cap (V^-\setminus D))$. Then it follows from (F5)
that $\overline{F(K\cap (V\setminus D))}\bigcup \{ \rho^-, \rho^+\}
= F^+(K\cap V^+)\bigcup F^-(K\cap V^-)\bigcup \{ \rho^-, \rho^+\}$;
i.e.,
\begin{equation} \label{alpha-2}
\alpha (K)= \overline{F(K\cap (V\setminus D))}\bigcup \{ \rho^-,
\rho^+\}.
\end{equation}
In order to prove that $\{ A_n\}$ is computable, it suffices to show
that the map $\alpha$ is computable since $A_n = \alpha^{n}(V)$.
Towards this end, it suffices to show that, for each $K\in
\mathcal{C}$, (i) $\alpha$ maps every outer-name of $K$ to an
outer-name of $\alpha(K)$ and (ii) $\alpha$ maps every inner-name of
$K$ to an inner-name of $\alpha(K)$. For (i): for each $K\in
\mathcal{C}$, it is clear that $K$ is contained in the ball centered
at the origin of $\mathbb{R}^2$ with a computable radius $\max\{
r^+, 27\}$. Then it follows from Theorems 5.1.13(2) and 6.2.4(4)
\cite{Wei00} that $\alpha$ meets the condition (i). For (ii): Since
$V\setminus D$ is r.e. open in $V$, there exist computable sequences
$\{ a_n\}$ and $\{ s_n\}$, $a_n\in V\setminus D$ with rational
coordinates and $s_n\in \mathbb{Q}$, such that $V\setminus D =
(\bigcup_{n\in\mathbb{N}}B(a_n, s_n))\bigcap V$, where $B(a, s)$ is
an open ball centered at $a$ with radius $s$. Now let $\{ d_j\}$ be
an inner-name of $K$; i.e., $\{ d_j\}$ is a sequence dense in $K$.
For each $m\in\mathbb{N}$, compute the Euclidean distance $d(d_j,
a_n)$ for all $1\leq j, n\leq m$. This algorithm yields a sequence
$\{ d_{j_i}\}$ that is a subset of $\{ d_j\}$, where $x\in
\{d_{j_i}\}$ if and only if $d(x, a_n)<s_n$ for some
$n\in\mathbb{N}$. It is readily seen that $\{ d_{j_i}\}$ is a dense
sequence in $K\cap (V\setminus D)$. Since $F$ is continuous on
$V\setminus D$, it follows that $F(\{ d_{j_i}\})$ is a dense
sequence in $F(K\cap (V\setminus D))$; thus $F(\{ d_{j_i}\})\cup \{
\rho^-, \rho^+\}$ is an inner-name of $\overline{F(K\cap (V\setminus
D))}\bigcup \{ \rho^-, \rho^+\}$, which equals $\phi(K)$ by
(\ref{alpha-2}).
\end{proof}

\begin{proposition}
\label{claim-3} The distance function $d_{A}$ is computable from $F$
and $\rho^{\pm}$.
\end{proposition}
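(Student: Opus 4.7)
The plan is to realize $d_A$ as the uniform limit of the sequence $\{d_{A_n}\}$. From Proposition~\ref{independence} the sets $\{A_n\}$ form a computable sequence of non-empty compact subsets of $V$, and Lemma~\ref{Lm:Weihrauch}(e) upgrades this uniformly to a computable sequence of distance functions $\{d_{A_n}\}$. Since $A=\bigcap_n A_n$ with $A_{n+1}\subseteq A_n$, the $d_{A_n}$ converge pointwise up to $d_A$; if I can establish an effective geometric-rate bound of the form
\[
\max_{(x,y)\in V}\bigl|d_{A_{n+1}}(x,y)-d_{A_n}(x,y)\bigr|\leq 108\,c^{n},
\]
then Lemma~\ref{Lm:Weihrauch}(d) immediately produces $d_A$ as a computable function of $F$ and $\rho^{\pm}$.

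To establish the bound I exploit the skew-product structure $F(x,y)=(f(x),g(x,y))$ together with the contraction $\partial g/\partial y\leq c$ from (F-4). An inductive argument using (F-3) and (F-5) shows that $f:I\setminus\{0\}\to I$ is surjective and, consequently, that the $x$-projection of every $A_n$ is the full interval $I=[r^-,r^+]$. For each $x_0\in I$ I then describe the fiber $A_n^{x_0}:=\{y:(x_0,y)\in A_n\}$ as a finite union of ``branch intervals'', one per $n$-step backward history $(x_{-n},\ldots,x_{-1})$ of $x_0$ under $f$: the branch indexed by such a history is the closure of $\{g_n(x_{-n},y):y\in[-27,27]\}$, where $g_n$ denotes the iterated $y$-coordinate of $F^n$. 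The chain rule combined with $\partial g/\partial y\leq c$ makes each such branch an interval of length at most $54\,c^{n}$.

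Next I show that every branch of $A_n^{x_0}$ contains a non-empty sub-branch of $A_{n+1}^{x_0}$: picking any $x_{-n-1}$ with $f(x_{-n-1})=x_{-n}$ (available by surjectivity of $f$), the corresponding $(n+1)$-step branch equals $\{g_n(x_{-n},g(x_{-n-1},y)):y\in[-27,27]\}$, which is a sub-interval of the original branch. Hence any $p=(x_0,y_0)\in A_n$ lying in a branch $I_p$ of length $\leq 54\,c^{n}$ admits a partner $q\in A_{n+1}$ with the same $x$-coordinate and $|p-q|\leq 54\,c^{n}$. Because $A_{n+1}\subseteq A_n$, the standard inequality $\max_{V}(d_{A_{n+1}}-d_{A_n})\leq \sup_{p\in A_n}d(p,A_{n+1})$ then yields the pointwise bound, comfortably within the stated $108\,c^{n}$ tolerance.

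The main obstacle is the boundary case of the sub-branch construction, when $x_{-n}\in\{r^-,r^+\}$ so that $x_{-n}$ has no preimage in $I\setminus\{0\}$ but only appears as a limit $x\to 0^\pm$. Here the $\alpha$-map used in the proof of Proposition~\ref{independence}, which explicitly adjoins $\{\rho^-,\rho^+\}$ to every iterate, supplies the point of $A_{n+1}$ needed to fill in the boundary branch; a short case analysis confirms that this is enough. With the estimate in hand, Lemma~\ref{Lm:Weihrauch}(d) applied to the computable, uniformly Cauchy sequence $\{d_{A_n}\}$ completes the proof.
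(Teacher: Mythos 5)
Your proof is correct and follows essentially the same route as the paper: both derive the geometric bound $\max_V\lvert d_{A_{n+1}}-d_{A_n}\rvert\leq 108\,c^{n}$ from the fiber contraction $\partial g/\partial y\leq c$ combined with a one-step pull-back under $f$ (possible since $f(I\setminus\{0\})=(r^-,r^+)$), and then conclude via Lemma~\ref{Lm:Weihrauch}(d),(e) applied to the computable sequence $\{A_n\}$ of Proposition~\ref{independence}. The only (harmless) divergence is the exceptional case $x=r^{\pm}$: the paper perturbs to a nearby $(\tilde x,\tilde y)$ and uses a triangle inequality (hence $108c^{n}$), whereas you use the extended half-maps of the $\alpha$-construction, whose adjoined points $\rho^{\pm}$ push forward to $F^{n}(\rho^{\pm})\in A_{n+1}$ in the same fiber and within the same branch, yielding the slightly sharper $54c^{n}$ -- both treatments are valid.
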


\begin{proof}  It suffices to show that $d_{A_{n}}$
meet the convergence condition of Lemma \ref{Lm:Weihrauch}-(d). The
proof makes use of the properties (F-3)
and (F-4).
Note that it follows from (F-3) that
$f^{n}([r^{-},0)\cup(0,r^{+}])=(r^{-},r^{+})$ for each positive
integer $n$; and from (F-4) that the distance between
$F^{n}(x,y_{1})$ and $F^{n}(x,y_{2})$ decreases exponentially in
$n$:
\[
d(F^{n}(x,y_{1}),F^{n}(x,y_{2}))<c^{n}|y_{1}-y_{2}|
\]
We also observe from (\ref{A-n-description}) that $A_{n+1}\subset
A_n$, $n\in\mathbb{N}$. In the following we show that, for any
$n\in\mathbb{N}$, $F^{n}(V\setminus D)$ is contained in a
$108c^{n}$-neighborhood of $F^{n+1}(V\setminus D)$; thus the
Hausdorff distance between $A_{n}$ and
$A_{n+1}$ is bounded by $108c^{n}$ (recall that $A_{n}=\overline{F^{n}%
(V\setminus D)}$). This fact shows that $d_{A_{n}}$ indeed meet the
convergence condition desired. For any $s\in F^{n}(V\setminus D)$,
there exists $(x,y)\in V$ such that $s=F^{n}(x,y)$. If $x\neq r^{-}$
and $x\neq r^{+}$, then it follows from the fact $f(I\setminus \{
0\})=(r^-, r^+)$ that
there exist $(u,v)\in V$ and $-27\leq w\leq 27$ such that
$F(u,v)=(x,w)$ and $(x, w)$ is in the domain of $F^n$; subsequently,
\begin{align*}
d(F^{n+1}(u,v), s) &  =d(F^{n}(F(u,v)),F^{n}(x,y))\\
&  =d(F^{n}(x,w),F^{n}(x,y))\\
&  \leq c^{n}|w-y|\leq54c^{n}%
\end{align*}
(note that $|w-y|\leq 54$). The above inequality shows that $s$ is
in a $54c^{n}$-neighborhood of $t$, where $t=F^{n+1}(u,v)\in
F^{n+1}(V\setminus D)$.
Next we consider the case where $s=F^n(x, y)$ and $x=r^{-}$ (thus
$y=t^-$). Since $f^n(I\setminus \{ 0\})=(r^-, r^+)$,
there exists $(\tilde{x},\tilde{y})\in V$ such that
$\tilde{x}\neq0,r^{-}$ nor $r^{+}$, and $d(F^{n}(x,y),F^{n}(\tilde{x}%
,\tilde{y}))\leq 54c^{n}$. We now apply the above argument to $\tilde{s}%
=F^{n}(\tilde{x},\tilde{y})$ to find $(u,v)\in V$ and $-27\leq w\leq
27$ such that $F(u,v)=(\tilde{x},w)$. It then follows that
\begin{align*}
&  d(F^{n+1}(u,v),F^{n}(x,y))\\
&  \leq d(F^{n+1}(u,v),F^{n}(\tilde{x},\tilde{y}))+d(F^{n}(\tilde
{x},\tilde{y}),F^{n}(x,y))\\
&  \leq54c^{n}+54c^{n}=108c^{n}%
\end{align*}
in other words, $s$ is in the $108c^{n}$-neighborhood of
$t=F^{n+1}(u, v)\in F^{n+1}(V\setminus D)$. The same argument
applies to the case where $x=r^{+}$. Thus we have shown that for any
$s\in F^{n}(V\setminus D)$ there exists $t\in F^{n+1}(V\setminus D)$
such that $s$ is in the $108c^{n}$-neighborhood of $t$. Hence the
Hausdorff distance between $A_{n}$ and $A_{n+1}$ is bounded by
$108c^{n}$.
\end{proof}

\medskip

Before proving our main result, we need one more Lemma, that will prove also useful in the next section.

\begin{lemma}\label{poincaremap}
Let $\phi$ be the flow of some Lorenz geometric system. Then we can uniformly compute from a ($C^2$) name of $\phi$:
\begin{enumerate}
\item  The return function $F$ (and its components $f,g$).
\item The return time function
$r:V\backslash D\rightarrow\lbrack0,+\infty)$.
\item The points $r^{\pm},t^{\pm}$.
\end{enumerate}
\end{lemma}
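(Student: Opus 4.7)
My plan is to handle the three items in the order (2), (1), (3). Item (2) is the core geometric computation; item (1) then follows by composition with the flow; item (3) needs a separate argument via the unstable manifold because $\rho^{\pm}$ lies at the boundary $D$ where $F$ is not defined.

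For (2), the return time $r(p)$ at $p=(x,y,27) \in V\setminus D$ is the unique smallest $t>0$ with $z(\phi_t(p))=27$. The geometric model stipulates that the $z$-component of the vector field is strictly negative on $\Sigma$, so the crossing is transversal and isolated. Given a name of $p \in V\setminus D$ we can (eventually) certify a rational lower bound $|x|>\delta$; on such a compact sub-region the return time is bounded, and for any fixed rational tolerance I can compute $\phi_t(p)$ at finitely many times, find rationals $0<a<b$ with $z(\phi_a(p))<27<z(\phi_b(p))$ (strict, hence verifiable), and bisect to locate $r(p)$. Continuity of $r$ on $V\setminus D$, inherited from the implicit function theorem at each transversal crossing, makes this uniform in $p$, giving computability of $r$ as a function.

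For (1), once $r$ is computable on $V\setminus D$, the formula $F(x,y)=\pi_{xy}\bigl(\phi_{r(x,y)}(x,y,27)\bigr)$ expresses $F$ as a composition of computable operations (evaluation of a $C^2$ flow at a computable time, followed by projection). The components are then $f(x)=\pi_x F(x,y)$, which is independent of $y$ by (F-1), and $g(x,y)=\pi_y F(x,y)$, both computable on the respective sets.

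For (3), I extract the vector field $X(p)=\partial_t\phi_t(p)|_{t=0}$ from the $C^1$ information in the name of $\phi$, then its linearization $L=DX(0)$. Standard computable linear algebra on the rational-approximable matrix $L$ yields all eigenvalues and eigenvectors; the geometric hypotheses single out the unique positive eigenvalue $\lambda_u$ and a unit eigenvector $v_u$ spanning the $1$-dimensional unstable subspace $E^u$. For small rational $\epsilon>0$, I flow $p_\epsilon^{\pm}=\pm\epsilon v_u$ forward to its first crossing $q^{\pm}(\epsilon)$ of the plane $z=27$, computed exactly as in (2). The claim is $q^+(\epsilon)\to\rho^-$ and $q^-(\epsilon)\to\rho^+$ as $\epsilon\to 0^+$ with an explicit, computable rate, which will let me output approximations to arbitrary precision by shrinking $\epsilon$.

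The main obstacle is the error analysis in (3). The time to fly from $\epsilon v_u$ up to $\Sigma$ is of order $\lambda_u^{-1}\log(1/\epsilon)$, so a crude Gronwall bound $|\phi_t(p_\epsilon)-\phi_t(q)|\le e^{Lt}|p_\epsilon-q|$ for any reference point $q$ on the unstable manifold $W^u_{loc}$ is useless. The remedy is to use the hyperbolic splitting $\mathbb{R}^3=E^s\oplus E^u$ at the origin: because $W^u_{loc}$ is $C^2$ and tangent to $v_u$ at $0$, the displacement between $p_\epsilon$ and the nearest point $\tilde{p}(\epsilon)\in W^u_{loc}$ is $O(\epsilon^2)$ and lies essentially in $E^s$, which contracts under $\phi_t$ while the trajectory remains in a neighborhood where the nonlinear remainder of $X-L\cdot$ is small. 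Quantifying this (e.g.\ via local $C^2$ linearization coordinates or via a direct Gronwall argument on the stable/unstable components separately) yields $|q^{\pm}(\epsilon)-\rho^{\mp}|=O(\epsilon)$ with a constant computable from $L$ and a $C^2$ modulus of $X$. Granting this, computing $\rho^{\pm}$ to precision $2^{-n}$ reduces to choosing $\epsilon$ below a computable threshold and evaluating $q^{\pm}(\epsilon)$ via (2).
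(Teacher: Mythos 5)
There is a genuine gap, and it sits exactly where the paper puts most of its effort: item (2). You characterize the return time as ``the unique smallest $t>0$ with $z(\phi_t(p))=27$'' and propose to bracket it by finding $a<b$ with $z(\phi_a(p))<27<z(\phi_b(p))$ and bisecting. In the geometric Lorenz model this is the wrong quantity and the wrong crossing. The return is a crossing of the \emph{cross-section} $\Sigma$ (equivalently $V$) \emph{from above}: the orbit leaves $\Sigma$ downwards, exits the solid, rises above the plane $z=27$ outside $\Sigma$ (it spirals around $Q_{\pm}$, which lie \emph{in} that plane), and only then falls back onto $\Sigma$. Hence the first time the orbit meets the plane $z=27$ is an upward crossing outside $\Sigma$, strictly earlier than $r(p)$, and it is precisely the kind of crossing your bracketing $z<27<z$ detects; your bisection would converge to such a spurious intersection rather than to $r(p)$. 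Even after restricting attention to crossings whose $(x,y)$-coordinates lie in $V$ and fixing the orientation ($z$ decreasing through $27$), you still owe an argument that the crossing you locate is the \emph{first} return and that no return can be skipped by your finite sampling. This is what the paper's ``adapted Euler method'' supplies: a transversality angle bound $\theta$ on a band $V\times[27-\varepsilon,27+\varepsilon]$, speed bounds $\alpha,\beta$, a band half-width $\varepsilon_0$ chosen so the band-crossing time is below the target precision, and a time step $\varepsilon_0/(2\beta)$ small enough that at the first re-entry from the top at least two consecutive iterates lie in $B^+=V\times[27,27+\varepsilon_0]$, combined with semi-deciding membership in $B^+$ versus its complement (with the observation that at most one iterate can sit on the boundary). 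Without some mechanism of this kind your proof of (2) does not go through, and (1) inherits the problem since it composes with $r$.

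For (3) your route is genuinely different from the paper's and is salvageable but incomplete as written. The paper invokes the computability of the invariant manifold of the origin from the vector field (citing prior work), takes a point exactly on the one-dimensional manifold, and follows its trajectory, which passes \emph{exactly} through $\rho^{\pm}$; the problem then reduces to (1)--(2) with no shadowing analysis at all. You instead start from $\pm\epsilon v_u$, which is only tangent-order close to $W^u_{loc}$, so everything hinges on the quantitative estimate $|q^{\pm}(\epsilon)-\rho^{\mp}|=O(\epsilon)$ with a computable constant, which you assert (``Granting this\ldots'') rather than prove. A Gronwall argument on the split components over the exit time $\sim\lambda_u^{-1}\log(1/\epsilon)$ typically yields an exponent degraded by the nonlinear remainder, something like $O(\epsilon^{1-C\delta/\lambda_u})$ in a $\delta$-neighborhood; that is still a computable modulus and would suffice, but it must actually be carried out, and it is the crux of your version of (3). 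Your item (1) (compose $\phi$ with the computed return time and project) matches the paper and is fine once (2) is repaired.
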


\begin{proof} Because we have access to a $C^2$ name of $\phi$, we can compute its derivative and
hence we can compute the function $h:D\subseteq\mathbb{R}^{3}\rightarrow
\mathbb{R}^{3}$ defining the ODE
\begin{equation}
y^{\prime}=h(y), \quad \label{ODE:Lorenz-1}%
\end{equation}
whose flow is $\phi$. Let us now show the condition (2) of the
Lemma.

The idea for the proof is relatively simple, that is, computing the
time that a trajectory starting in a point $a\in V\times\{27\}$
needs to hit $V\times\{27\}$ again. The strategy is to compute
iterates $\phi_{t_i}(a)$ for $i=1,2,\ldots$ until the iterate is on
(or close enough to) $V\times\{27\}$. The difficulty is that we need
to be careful on the way how we choose the time step needed to
 compute the time $t^*>0$ when $\phi_{t}(a)$ hits $V\times\{27\}$
for the first time,
to avoid returning some $t^{**}>t^*$ with $\phi_{t^{**}}\in V\times\{27\}$.


Since the flow of (\ref{ODE:Lorenz-1}) behaves like the geometric
Lorenz attractor, we conclude that the flow will cross the
cross-section $V$ transversally, which has the direction of the
positive $z$-axis as its ``normal" direction. This implies that for
any point $a\in V$, the angle $\measuredangle(h(a),V)$ between
$h(a)$ and the cross-section $V\times \{27\}$ will satisfy
$\measuredangle(h(a),V)\neq0$.
Let $\theta=\frac{\min_{a\in V}\left\vert
\measuredangle(h(a),V)\right\vert }{2}>0$. Then there exists some
$\varepsilon>0$ such that%
\[
\min_{a\in V\times\lbrack27-\varepsilon,27+\varepsilon]}\left\vert
\measuredangle(h(a),V)\right\vert >\theta>0\text{.}%
\]
(Recall that $V$ is compact and thus the minimum exists.) Initially
the flow on $V\times \{27\}$ will be pointing downwards, i.\ e.\
$\measuredangle (h(a),V)<0$. Let $a\in V\backslash D$ and suppose
that we want to compute $r(a)$ with precision bounded by some value
$\epsilon>0$, i.\ e.\ we want to
compute a value $\tilde{r}_{a}$ such that%
\begin{equation}
\left\vert r(a)-\tilde{r}_{a}\right\vert \leq\epsilon
\label{Eq:ReturnTimeApprox}%
\end{equation}
To prove this result we will use an \textquotedblleft
adapted\textquotedblright\ Euler method to compute $\tilde{r}_{a}$.
The idea is to numerically compute the solution of
(\ref{ODE:Lorenz-1}) starting at $a$ using an algorithm which
discretizes time steps, similar to Euler's method. However the time
steps must be chosen small enough so that we can detect when the
flow first leaves the band
$V\times\lbrack27-\varepsilon,27+\varepsilon]$, and then when it
re-enters this band again (from the top). In this manner, by
improving the accuracy of the numerical method and/or using a
smaller $\varepsilon$, we will be able to compute a suitable
approximation $\tilde{r}_{a}$ for the return time $r(a)$ which
satisfies condition (\ref{Eq:ReturnTimeApprox}). Of course, we have
to describe more precisely how this method works.

Let
\[
\alpha=\min_{a\in V\times\lbrack27-\varepsilon,27+\varepsilon]}\left\Vert
h(a)\right\Vert ,\text{ \ \ }\beta=\max_{a\in V\times\lbrack27-\varepsilon
,27+\varepsilon]}\left\Vert h(a)\right\Vert
\]
Note that $\alpha,\beta>0$ since $V$ is compact and contains no
zeros of $h$. A simple analysis (consider the component of the flow
$h_{V}(b)$ which is orthogonal to $V\times \{27\}$, given by
$h_{V}(b)=\left\Vert h(b)\right\Vert \left\vert
\sin(\measuredangle(h(b),V))\right\vert $, for any $b\in V\times
\lbrack27-\varepsilon,27+\varepsilon]$, which satisfies
$\alpha\sin\theta \leq\left\Vert h(b)\right\Vert \sin\theta\leq
h_{V}(b)\leq\left\Vert h(b)\right\Vert \leq\beta$) shows that the
flow of (\ref{ODE:Lorenz-1}) cannot take more than
$2\varepsilon/(\alpha\sin\theta)>0$ time units to cross the band
$V\times\lbrack27-\varepsilon,27+\varepsilon]$ (basically the flow
will have to cross this band; but since the norm of the orthogonal
component is at least $\alpha\sin\theta$, this will be done in time
$2\varepsilon/(\alpha \sin\theta)$), but will require at least
$2\varepsilon/\beta>0$ time units to cross it (since the norm of the
orthogonal component is bounded by $\beta$). Now pick some rational
$\varepsilon_{0}>0$ satisfying $\varepsilon_{0}\leq
\min\{\epsilon\alpha\sin\theta/2,\varepsilon\}$. In particular this
implies that the maximum time the flow takes to cross the band
$B=V\times\lbrack 27-\varepsilon_{0},27+\varepsilon_{0}]\subseteq
V\times\lbrack27-\varepsilon
,27+\varepsilon]$ is%
\begin{equation}
2\varepsilon_{0}/(\alpha\sin\theta)\leq\frac{2\epsilon\sin\theta}{2\sin\theta
}\leq\epsilon\label{aux1}%
\end{equation}
This implies that if we can tell that the flow starting at $a$ leaves and then
re-enters the band $V\times\lbrack27-\varepsilon_{0},27+\varepsilon_{0}]$ from
the top for the first time at time $T_{0}$, and stays in this band up to time
$T_{0}^{\ast}$, with $T_{0}<T_{0}^{\ast}$ (note that $T_{0}^{\ast}%
-T_{0}<\epsilon$ due to (\ref{aux1})) and if we can determine a time
$T\in\lbrack T_{0},T_{0}^{\ast}]$, then we can return $\tilde{r}_{a}=T$ since
condition (\ref{Eq:ReturnTimeApprox}) holds in that case. Now let us see how
we can determine this value $T$.

Let $B^+=V\times\lbrack 27,27+\varepsilon_{0}]$,
$B^-=V\times\lbrack 27-\varepsilon_{0},27]$, and $\delta=\varepsilon_{0}/(2\beta)$.
Now consider the sequence of iterates $\phi_{t_i}(a)$ where $0<t_{i+1}-t_i\leq\delta$ and
$\{t_i\}_{i\in\N}$ is computable.
Since the flow of $\phi_t(a)$ takes at least $2\delta$ time units to cross each
band $B^\pm$, we are certain that $\phi_{t_1}(a),\phi_{t_2}(a)\in B^-$
when the flow first leaves $V$ from $a$ and that there is some $k>0$ such that
$\phi_{t_k}(a),\phi_{t_{k+1}}(a)\in B^+$ with $t_k,t_{k+1}\in \lbrack T_0,T_{0}^{\ast}]$.

Note that the interior of $B^+$ is a r.e. open sets, as well as its complement. Since at
every time $t_i$ the corresponding iterate is computable, and because one can
semi-decide whether a computable point belongs to a r.e. open set, we can semi-decide
in parallel, for each $i\in\N$, whether the iterates $\phi_{t_i}$ or $\phi_{t_{i+1}}$ belong to
$B^+$ or to its complement. Since only one of these iterates can fall exactly in
the boundary of $B^+$ (which is the only thing one cannot detect), we know that we
can tell in finite time, for at least one of the iterates, whether it belongs to
$B^+$ or to its complement. Now run this procedure as a subroutine
for each pair $t_i$, $t_{i+1}$. Start with $i=1$ and increment $i$ each time we conclude that an
iterate $\phi_{t_j}$, for $j\in\{i,i+1\}$ does not belong to $B^+$. If we conclude that
some iterate $\phi_{t_j}$ belongs to $B^+$ then stop the algorithm and return $\tilde{r}_{a}=t_j$.

Note that this algorithm always stops, in the worst case, when $i=k$, and therefore always computes
the return time.

To prove condition (1) of the Lemma, we note that $F(a)$ is the solution
of \eqref{ODE:Lorenz-1} with initial condition $y^{\prime}(0)=a$ at time $r(a)$.
Since $r$ is computable from $\phi$ and the solution of \eqref{ODE:Lorenz-1} is
also computable from $h,a$ \cite{GZB07} and hence from $\phi,a$, we conclude that $F$
is computable from $\phi$.

To prove condition (3) of the Lemma, we notice that the stable manifold of the
origin is locally computable from $h$ \cite{GZB12}. If we compute a local version
of the stable manifold which stays on the half-space $z<27$, and if we take some point
from that local stable manifold which is not the origin, we know that the trajectory
starting from this point will move upwards, until it reaches the plane $z=27$ and then
continues moving up, until it falls and reaches the plane $z=27$ for the second time.
At this time the intersection will occur at $\rho^-$ or $\rho^+$, depending on whether
the first coordinate of this intersection point is  positive or negative, respectively.
Hence, using similar arguments as those used for the cases (2) and (1), we conclude
that $\rho^{\pm}$ must be computable and hence $r^{\pm}, t^{\pm}$ are also computable from $\phi$.
\end{proof}

We are now in position to prove our first main result.

\begin{theorem}
\label{Th:GeometricAttractor} The global attractor $\mathcal{A}$ of a
geometric Lorenz flow $\phi$ is computable from a $(C^2)$ name of $\phi$.

\end{theorem}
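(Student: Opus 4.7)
By Lemma~\ref{poincaremap}, a $C^{2}$ name of $\phi$ lets us uniformly compute the return map $F$, the return-time function $r$, and the points $\rho^{\pm}$; Theorem~\ref{Th:ReturnMapAttractor} then produces the cross-section attractor $A$ as a computable compact subset of $V\subset\mathbb{R}^{2}$. Since $F(A\setminus D)\subseteq A$ and $\phi_{r(a)}(a)=F(a)$, every forward orbit starting in $A\setminus D$ re-enters $A$ at time $r(a)$, while orbits starting in $A\cap D$ tend to the origin; combined with (F-5) and the fact that $r(a)\to+\infty$ as $a\to D$, this gives the description
\[
\mathcal{A}=\overline{\{\phi_{t}(a):a\in A\setminus D,\,0\le t\le r(a)\}}\cup\{(0,0,0)\}
\]
which will be the starting point. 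The plan is then to produce separately an r.e.\ and a co-r.e.\ description of $\mathcal{A}$, which together give computability.

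For the r.e.\ (inner) name, I proceed as in the proof of Proposition~\ref{independence}: from the computable dense sequence of $A$ I extract, by semi-deciding the open condition $x\neq 0$, a computable dense sequence $\{a_{i}\}\subseteq A\setminus D$. For each $i$ and each rational $t_{i,j}\in[0,r(a_{i}))$ the point $\phi_{t_{i,j}}(a_{i})\in\mathbb{R}^{3}$ is computable from $\phi$. Together with the origin, the family $\{\phi_{t_{i,j}}(a_{i})\}\cup\{(0,0,0)\}$ is dense in $\mathcal{A}$: the arcs $\{\phi_{t}(a):0\le t\le r(a)\}$ are densely approximated for $a\in A\setminus D$ by continuity of $\phi$, and any orbit starting in $A\cap D$ (in particular the origin as its limit) appears as a pointwise limit of orbits from nearby points of $A\setminus D$.

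For the co-r.e.\ (outer) name the plan is to exhibit $\mathcal{A}=\bigcap_{n}K_{n}$ as a decreasing intersection of \emph{computable} compact sets; granting this, $\mathbb{R}^{3}\setminus\mathcal{A}=\bigcup_{n}(\mathbb{R}^{3}\setminus K_{n})$ is r.e.\ open as a union of r.e.\ open sets. I take $K_{n}=\phi_{n}(\overline{M})$, where $\overline{M}$ is the closure of the forward saturation $\{\phi_{t}(q):q\in V\times\{27\},\,t\ge 0\}$: $\overline{M}$ is a compact forward-invariant trapping region, so $K_{n+1}\subseteq K_{n}$ and $\bigcap_{n\ge 0}K_{n}=\mathcal{A}$ by the standard characterization of $\mathcal{A}$ as the maximal invariant set in a trapping neighborhood. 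Since $\phi_{n}$ is computable and continuous, Lemma~\ref{Lm:Weihrauch}(a) makes each $K_{n}$ computable compact as soon as $\overline{M}$ itself is.

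The main obstacle is therefore the computability of $\overline{M}$, and in particular its outer name. I would handle this by exploiting the explicit geometric description of $\overline{M}$ as the bounded solid enclosed by three piecewise-smooth surfaces: the top face $V\times\{27\}$ (a computable rectangle), the lateral ``flow tube'' $\phi_{[0,\infty)}(\partial V\times\{27\})$ (a computable continuous image of a computable compact set), and a segment of the local stable manifold $W^{s}(0)$ of the origin (locally computable by \cite{GZB12}, as already invoked in the proof of Lemma~\ref{poincaremap}). Each bounding surface is individually computable from $\phi$, and a rational ball is disjoint from $\overline{M}$ precisely when it lies, by positive distance, on the exterior side of each of these pieces — a condition which is semi-decidable and yields the required outer name. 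Combining the r.e.\ and co-r.e.\ descriptions, $\mathcal{A}$ is computable from the $C^{2}$ name of $\phi$.
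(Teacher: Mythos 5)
Your reduction to Lemma~\ref{poincaremap} and Theorem~\ref{Th:ReturnMapAttractor}, and your construction of the inner (r.e.) name, are essentially the paper's: the paper takes the computable compact sets $O_T(A)=\phi([-T,T]\times A)$ for rational $T$ (Lemma~\ref{Lm:Weihrauch}(a)) and lists dense sequences in them, while you flow a computable dense sequence of $A\setminus D$ forward for rational times up to the return time; both work, though your density claim (that forward segments up to $r(a)$, plus the origin, exhaust $\mathcal{A}$, including the arcs of $W^u(0)$ obtained from $\rho^{\pm}$ by flowing \emph{backward}) needs the extra observation that $A\setminus D$ contains points with $x$-coordinate arbitrarily close to $0$ and that their return orbits shadow these arcs.

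The genuine gap is in your outer (co-r.e.) name. Your plan rests on two unproved claims. First, that $\overline{M}$, the closure of the forward saturation of $V\times\{27\}$, is a computable compact set with the boundary description you give. That description is not adequate: the lateral piece $\phi_{[0,\infty)}(\partial V\times\{27\})$ is the image of a \emph{non-compact} parameter set, so Lemma~\ref{Lm:Weihrauch}(a) does not apply and its computability (even its closedness) is not immediate; moreover the forward saturation is not the solid bounded by ``top face, lateral tube, stable-manifold piece'' --- the trajectories leave the ends, loop over and re-enter through the top, so $\overline{M}$ has returning lobes above and around $z=27$, and a rational ball can fail to meet $\overline{M}$ without lying ``on the exterior side'' of each of your three surface pieces (nor is ``exterior side'' of a bounded surface patch a semi-decidable, or even well-defined, condition as stated). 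Second, the identity $\bigcap_{n}\phi_{n}(\overline{M})=\mathcal{A}$ is not the paper's definition of $\mathcal{A}$ (which is $\bigcup_{t\in\mathbb{R}}\phi_{t}(A)\cup\{(0,0,0)\}$ with $A=\bigcap_n\overline{F^n(V\setminus D)}$); identifying $\mathcal{A}$ with the maximal invariant set of $\overline{M}$ requires an argument that $\overline{M}$ carries no other full orbits (e.g.\ that $Q_{\pm}$ and segments of their stable lines are not in $\overline{M}$), which you assert but do not prove. The paper avoids both difficulties with a much lighter mechanism: to semi-decide $x\notin\mathcal{A}$ it uses the adapted-Euler machinery of Lemma~\ref{poincaremap} to compute the first landing point $l(x)=\phi_t(x)\in V$ of the forward trajectory, notes that $x\in\mathcal{A}$ iff $l(x)\in A$, and semi-decides $l(x)\notin A$ directly from the co-r.e.\ information about $A$ supplied by Theorem~\ref{Th:ReturnMapAttractor}. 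Unless you can actually establish the computability of $\overline{M}$ and the maximal-invariant-set identity, you should replace your co-r.e.\ step by an argument of this landing-point type.
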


\begin{proof} By Lemma \ref{poincaremap}, we only need to show that the operation $(\phi, F, r^{\pm})\to\mathcal{A}$
is computable. To prove that $\mathcal{A}$ is computable from $\phi, F$, and
$r^{\pm}$, it suffices to show that, from the given information, (i)
a sequence dense in $\mathcal{A}$ can be computed and (ii) a
sequence of open rational balls exhausting the complement of
$\mathcal{A}$ can be computed.

For $x\in V$ and $T>0$, let $O_{T}(x)=\{ \phi(t,x) : -T \leq t \leq T \}$ and
$O_{T}(A)=\cup_{x\in A}O_{T}(x)$. Then $\mathcal{A}=O_{\infty}(A)\bigcup\{ (0,
0, 0)\}$. Since for each positive rational number $T$, the compact subset
$O_{T}(A)$ of $\mathbb{R}^{3}$ is computable from $\phi, T$, and $A$ by Lemma
\ref{Lm:Weihrauch}-(a), it follows from Theorem \ref{Th:ReturnMapAttractor} that a sequence dense in $O_{T}(A)$ can be
computed using the given information. By effectively listing the set of all
positive rational numbers and then using a computable pairing function, we
obtain a sequence dense in $O_{\infty}(A)$, which is of course also dense in
$\mathcal{A}$. This proves (i).

We now turn to (ii). It is enough to show that given a point $x \in M$
we can semi-decide, uniformly in $x$, whether $x$ is outside the global
attractor $\mathcal{A}$, that is, whether $x\notin \mathcal{A}$.
By the proof of Lemma \ref{poincaremap}, we know that we can use $\phi$
to follow the trajectory starting at $x$ until it hits $V$ for the first time,
and then compute the point $l(x) \in V$, at which this trajectory lands. Note
that $l(x)=\phi_t(x)$  for some (computable) time $t$.  It follows that $x \in \mathcal{A}$
if and only if $l(x)\in A$, and this last relation can be semi-decided by Theorem \ref{Th:ReturnMapAttractor}. This proves (ii).

\end{proof}

\begin{corollary}
The geometric Lorenz attractor contains computable points with dense orbits.
\end{corollary}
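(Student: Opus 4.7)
The plan is to reduce the statement to finding a computable point with dense $F$-orbit on the Poincar\'e section and then to construct such a point by an effective Baire-category argument exploiting topological transitivity of $F$ on $A$.

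\emph{Reduction.} I would first note that it suffices to produce a computable $x_0\in A$ whose forward $F$-orbit $\{F^n(x_0)\}_n$ is dense in $A$. By Lemma~\ref{poincaremap} the return time $r$ and the flow $\phi$ are computable, so the trajectory $\{\phi_t(x_0):t\geq0\}$ is computable. Any point $p\in\mathcal{A}\setminus\{(0,0,0)\}$ can be written as $\phi_s(y)$ with $y\in A$ and $s\geq0$ and is then approximated by $\phi_{t_n+s}(x_0)$, where $t_n$ are the successive return times and $F^n(x_0)\to y$; the equilibrium $(0,0,0)$ is approached because iterates of $x_0$ accumulate on the singular set $D\cap A$, from whose neighbourhood the flow swings arbitrarily close to the origin before returning. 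Thus density of the $F$-orbit on $A$ lifts to density of the $\phi$-orbit on $\mathcal{A}$.

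\emph{Construction of $x_0$.} The key input is the classical fact that $F$ is topologically transitive on $A$: the one-dimensional quotient $f$ is piecewise expanding with $f'>\sqrt{2}$ and, by (F-5), has two branches whose closures cover $[r^-,r^+]$, so every open subinterval of $I$ eventually fills $(r^-,r^+)$; combined with the $y$-contraction (F-4) this yields transitivity of $F$ on $A$. Theorem~\ref{Th:ReturnMapAttractor} then gives an inner name of $A$, so the relation ``$B\cap A\neq\emptyset$'' is semidecidable for every rational open ball $B\subseteq\mathbb{R}^2$. Enumerate the rational balls $B_1,B_2,\ldots$ that meet $A$ and build inductively a nested sequence of rational open boxes $U_0\supseteq\overline{U_1}\supseteq U_1\supseteq\cdots$ with $\mathrm{diam}(U_k)<2^{-k}$, $U_k\cap A\neq\emptyset$, and integers $N_1<\cdots<N_k$ such that $F^{N_i}(\overline{U_k})\subseteq B_i$ for each $i\leq k$. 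Passing from stage $k$ to $k+1$, I would dovetail a search over computable points $q\in U_k\cap A$ drawn from the inner name of $A$ and over integers $N>N_k$ until a pair $(q,N)$ is found with the finite orbit $q,F(q),\ldots,F^N(q)$ disjoint from $D$ and $F^N(q)\in B_{k+1}$; both conditions are semidecidable for computable $q$, and transitivity guarantees a witness. Continuity of $F^N$ on a neighbourhood of $q$ then allows the selection of an admissible $U_{k+1}$. The limit $x_0=\bigcap_k\overline{U_k}$ is a computable element of $A$ whose forward $F$-orbit visits every $B_n$ meeting $A$, and is therefore dense in $A$.

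\emph{Main obstacle.} The delicate point throughout is the singular line $D$ on which $F$ is discontinuous: a careless candidate $q$ might have some finite iterate land in $D$, making ``$F^N(q)\in B_{k+1}$'' not finitely decidable. The remedy outlined above — restricting the search to $(q,N)$ whose first $N$ iterates avoid $D$ — is sound because this is an open, semidecidable condition, and the preimages of $D$ under iterates of $F$ form a countable union of arcs in $A$, hence a meagre set whose complement is dense in $A$; thus every stage of the construction terminates, and the invariants are maintained in the limit.
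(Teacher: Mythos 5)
Your proposal is correct in substance and follows the same overall strategy as the paper: work on the cross-section, use computability of $A$ (Theorem \ref{Th:ReturnMapAttractor}) and of $F$, $r$ on $A\setminus D$ (Lemma \ref{poincaremap}), invoke topological transitivity of $F$ on $A$, and lift a dense $F$-orbit to a dense flow orbit in $\mathcal{A}$. Where you genuinely diverge is in the key step: the paper obtains the computable point with dense orbit by citing \cite{GHR09b} (Theorem 3), and cites \cite{GH83} for transitivity, whereas you re-prove that ingredient by an explicit effective Baire-category construction (dovetailing over the inner name of $A$, nested rational boxes, semidecidable membership tests), and you sketch transitivity via the locally-eventually-onto property of $f$; note that your requirement $N>N_k$ needs exactly this stronger property rather than bare transitivity, so invoking it is the right move. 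Your route buys self-containedness and an explicit algorithm; the paper's citation buys brevity, with the bookkeeping around the discontinuity set already absorbed in the general theorem. Two small repairs are advisable. First, requiring the whole finite orbit $q,F(q),\ldots,F^N(q)$ to avoid $D$ is more than you need (only $q,\ldots,F^{N-1}(q)\notin D$ is required to compute and semidecide $F^N(q)\in B_{k+1}$), and keeping the stronger requirement forces you to know that $B_{k+1}\setminus D$ still meets $A$; your justification that the iterated preimages of $D$ form ``a countable union of arcs, hence a meagre set'' is not automatic, since a priori an arc of $D\cap A$ could contain a relatively open piece of $A$ --- what is really needed is that $A\setminus D$ (and hence each $F^{-n}D\cap A$ has empty interior in $A$) is dense in $A$, a fact which is true by the local product structure of $A$ and which the paper also uses without proof; either drop the extra requirement or invoke this density explicitly. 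Second, in the lifting step the points of $\mathcal{A}$ are of the form $\phi_s(y)$ with $y\in A$ and $s\in\mathbb{R}$ (not only $s\geq0$), but your argument covers this unchanged: $\phi_{s}(F^{n_k}(x_0))=\phi_{s+T_{n_k}}(x_0)$ with cumulative return times $T_{n_k}\to\infty$, so these approximants eventually lie on the forward orbit, and your treatment of the equilibrium at the origin via accumulation on $D\cap A$ is fine. With these adjustments your construction goes through and yields the corollary.
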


\begin{proof}
By the previous result, $A$ itself is a computable metric space. The
Poincar\'e map on $A$ is well defined and computable on $A\setminus D$ which,
with respect to the induced topology on $A$ is a recursively enumerable open set which is dense on $A$. Moreover, this
dynamical system is transitive (see for instance \cite{GH83}) and therefore it
contains a computable point whose orbit is dense in $A$ (see \cite{GHR09b}, Theorem 3).
But the orbit of this point under the flow is dense in
$\mathcal{A}$, which finishes the proof.
\end{proof}

\section{A computable geometric Lorenz flow admits a computable physical
measure}

\label{Sec:SBRmeasure}

Given an invariant probability measure $\mu$ for a flow $\phi_{t} $ on a space
$M$, let $\mathcal{B}(\mu)$ be the set of initial conditions $z\in M$
satisfying for all continuous functions $\varphi:M\to\mathbb{R}$:
\[
\lim_{T\to\infty}\frac{1}{T}\int_{0}^{T}\varphi(\phi_{t}(z))\,dt = \int_{M}
\varphi(z)\, d\mu.
\]
The set $\mathcal{B}(\mu)$ is known as the (ergodic) \emph{basin} of $\mu$.
When this basin has positive volume, one says that the measure $\mu$ is
\emph{Physical}, or $\emph{SRB}$ (for Sinai-Ruelle-Bowen, see for instance
\cite{young2002srb}). These measures are ``physical" in the sense that they
describe the statistical asymptotic behavior for a ``big" (positive volume)
set of initial conditions, so they represent the ``physically observable"
equilibrium states of the system.

Geometric Lorenz attractors are robust attractors of 3-dimensional flows, and
it was shown in \cite{APPV09} that they admit a unique physical measure. In this section, we show that
if the data defining a geometric Lorenz flow are computable, then the flow
admits a computable physical measure.

We start by recalling the definition of computable measure.

\begin{definition}
A probability measure $\mu$ on a (computably) compact subset $M\subset\mathbb{R}^{3}$
is computable if the integration operator $\varphi\to\int_{M} \varphi\,d\mu$,
where $\varphi$ is a continuous real valued function on $M$, is computable.
\end{definition}

It can be shown (see for instance \cite{Roj08}) that if $R:M\to
M^{\prime}$ is a computable function and $\mu$ is a computable probability
measure on $M$, then the push forward $R^{*}\mu$ of $\mu$ by $R$, defined by
\[
R^{*}\mu(E)=\mu(f^{-1}(E))
\]
is also a computable measure.

\begin{theorem}
\label{Th:PhysicalMeasure}Let $\phi$ be the flow of some Lorenz geometric system.
If $\phi$ is ($C^2$) computable, then the geometric Lorenz flow admits a computable
physical measure. More generally, the geometric Lorenz flow admits a physical measure which is computable from $\phi$.
\end{theorem}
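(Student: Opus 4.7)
The plan is to compute $\mu_\phi$ by two successive reductions: first express it as a suspension of a physical measure $\mu_F$ for the Poincar\'e return map $F$, then compute $\mu_F$ from the physical (ACIM) measure $\mu_f$ of the one-dimensional factor map $f$. By Lemma~\ref{poincaremap} I already have $F$, $f$, $g$, the return time $r$ and the points $\rho^\pm$ computable from a $C^2$ name of $\phi$, and by Theorem~\ref{Th:GeometricAttractor} both $\mathcal{A}$ and $A$ are computable. It therefore suffices, given an arbitrary continuous test function $\varphi:M\to\mathbb{R}$, to describe an algorithm producing rational approximations of $\int_M\varphi\, d\mu_\phi$ to any prescribed precision.

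For the first reduction I invoke the suspension representation
\[
\int_M \varphi\, d\mu_\phi \;=\; \frac{1}{\int_V r\,d\mu_F}\int_V\Bigl(\int_0^{r(x,y)}\varphi(\phi_t(x,y,27))\, dt\Bigr)\, d\mu_F(x,y),
\]
which is the standard identity relating the physical measure of a suspension flow to the physical measure of its base map. The inner integral is a computable continuous function on $V$ (as $r$ and $\phi_t$ are computable), and $\int_V r\, d\mu_F \ge \inf r>0$ is itself computable; hence computability of $\mu_\phi$ reduces to computability of $\mu_F$.

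For the second reduction I would exploit the stable contraction (F-4) and the fibration (F-1). For any Lipschitz $\psi:V\to\mathbb{R}$ and any $(x,y_1),(x,y_2)\in V$ with the same first coordinate,
\[
|\psi(F^n(x,y_1))-\psi(F^n(x,y_2))|\le 54\,\mathrm{Lip}(\psi)\, c^n,
\]
so the pushforward $F^n_* m_V$ is at $W_1$-distance $\le 54\, c^n$ from the computable measure $\widetilde\mu_n:=(x\mapsto F^n(x,0))_* m_I$. Because $\pi_*(F^n_* m_V)=f^n_* m_I\to\mu_f$ and $F^n_* m_V\to\mu_F$, the task of approximating $\int\psi\, d\mu_F$ is reduced, with explicit $c^n$ error control, to effective knowledge of the one-dimensional physical measure $\mu_f$ together with the already-computable curves $x\mapsto F^n(x,0)$.

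The remaining and main technical step is therefore to show that $\mu_f$ is effectively computable. For this I would use the Perron--Frobenius transfer operator $L_f$ on $\mathrm{BV}(I)$: since $f$ is piecewise $C^2$ with two monotone branches, bounded distortion and $f'>\sqrt{2}$, a Lasota--Yorke inequality
\[
\|L_f^n h\|_{\mathrm{BV}}\le A\lambda^n\|h\|_{\mathrm{BV}}+B\|h\|_1,\qquad \lambda<1,
\]
holds with constants $A,B,\lambda$ extractable effectively from the computable data $(f,\rho^\pm)$. The resulting spectral gap for $L_f$ on BV yields an effective exponential estimate $\|L_f^n\mathbf{1}-h_f\|_{L^1}\le K\eta^n$ with computable $K,\eta<1$, and since $L_f$ acts computably on BV densities (its two inverse branches are computable), the iterates $L_f^n\mathbf{1}$ form a computable sequence converging in $L^1$ to the invariant density $h_f$. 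This makes $\mu_f=h_f\, dm_I$ computable and, chained with the first two reductions, yields $\mu_\phi$ computable from a $C^2$ name of $\phi$. The main obstacle I anticipate is precisely this effective Lasota--Yorke step: one must extract explicit computable constants $A,B,\lambda,K,\eta$ from the data of the flow and control the BV behaviour of $L_f^n\mathbf{1}$ near the endpoint singularity $f'(x)\to\infty$ at $x=0$, which, while favourable for expansion, demands care near the limit points $\rho^\pm$ where iterates accumulate. Everything else in the argument is bookkeeping of computable operations on already-established computable objects.
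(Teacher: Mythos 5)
Your overall architecture is the same as the paper's (suspension of $\mu_F$ over the return time to get the flow measure, fiber contraction at rate $c^n$ to get $\mu_F$ from the one-dimensional measure $\mu_f$, and an absolutely continuous invariant measure for the expanding map $f$ as the base ingredient), but there are two genuine gaps. The first is in your opening reduction: you assert that the inner integral $\int_0^{r(x,y)}\varphi(\phi_t(x,y,27))\,dt$ is a computable continuous function on $V$ and that $\int_V r\,d\mu_F$ ``is itself computable'' with no argument. Neither statement is true as written: the return time $r$ blows up (logarithmically) as $x\to 0$, since orbits starting near the stable leaf $D$ linger near the origin, so $r$ and the inner integral are unbounded and only defined on $V\setminus D$. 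Computability of $\int_V r\,d\mu_F$ is exactly the point the paper isolates in a separate Claim, proved using the estimate $|r(x)-r(y)|\leq C\,|\ln|x|-\ln|y||$ from \cite{luzzatto2005lorenz} together with the boundedness of the density of $\mu_f$, which gives the tail bound $\int_0^{\epsilon}r\,d\mu_f=O(\epsilon\ln(1/\epsilon))$ and hence effective convergence of the truncated integrals. Without some such tail estimate your first reduction does not go through, because a computable measure only integrates \emph{bounded} continuous (or at least effectively truncatable) test functions with guaranteed precision.

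The second gap is that the step you yourself flag as the main obstacle --- computability of $\mu_f$ --- is precisely where the substantive work lies, and your sketch leaves it open. An effective Lasota--Yorke/spectral-gap argument on $\mathrm{BV}$ is a plausible route, but for a Lorenz-like one-dimensional map the derivative is unbounded at $0$ and the branches are not $C^2$ up to the singularity, so the standard constants are not immediately ``extractable'' and you give no actual derivation; moreover, mere computability of $f$ does not in general yield computability of its ACIM, so this cannot be waved through. The paper avoids this by quoting an existing computability result for such measures (\cite{GHR12}), and it also sidesteps your slightly muddled intermediate step (pushing Lebesgue measure $m_V$ forward, which would additionally require an effective rate for $f^n_*m_I\to\mu_f$) by instead pushing forward $\nu=\mu_f\times\mu_L$, whose base marginal is already invariant, so that only the fiber contraction $c^n$ enters the error control, with the identification of the limit as the physical measure for $F$ and for the flow taken from \cite{araujo2010three}. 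If you fill in (i) the tail estimate for $r$ and (ii) a complete proof or citation for computability of $\mu_f$, and replace $m_V$ by $\mu_f\times\mu_L$ in the second reduction, your argument matches the paper's.
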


\begin{proof}
Let $F: V^{-}\bigcup V^{+}\to V$, $F(x, y)=(f(x), g(x, y))$, be the
return map of the geometric Lorenz flow, as defined in Subsection
\ref{Subsec:LorenzGeometric}. The map $f: I\setminus\{0\}\to I$, $I=[r^{-},
r^{+}]$, describes the dynamics of the leaves $\{\gamma_{x}\}_{x\in I}$ of the
foliation $\mathcal{F}$ of $V$, which is invariant for the return map $F$
(recall that the leaves are just vertical straight lines $x=c$). In
particular, for each $x\in I$ and $x\neq0$,
\[
F(\gamma_{x})\subset\gamma_{f(x)}.
\]
Moreover, the dynamics of $F$ is uniformly contracting in the direction of the
leaves of $\mathcal{F}$.

Since $f$ is expanding, it follows that it admits a unique ergodic invariant
measure $\mu_{f}$ on $[r^{-},r^{+}]$ which is absolutely continuous with
respect to Lebesgue measure (see for instance \cite{viana1997stochastic}).  Moreover, it can be shown that
this measure has a bounded density function. Recall that by Lemma \ref{poincaremap},
the functions $F$ and $f$ are computable from $\phi$.  It follows from \cite{GHR12} that
$\mu_{f}$ is also computable from $\phi$.

One then considers the product measure $\nu=\mu_{f}\times\mu_{L}$ on $V$,
where $\mu_{L}$ is just the Lebesgue measure on $[-27,27]$, normalized to
integrate one. It is easy to see that $\nu$ is a computable measure too. By
the contracting property of $F$ on the leaves, it follows that the
push-forwards $F^{*}\nu$ of this measure by $F$, defined by
\[
F^{*n}\nu(E) = \nu(F^{-n}E),
\]
converge exponentially fast (in the weak* topology) towards a limit measure
$\mu_{F}$ on $V$ which is invariant and physical for $F$ (see \cite{araujo2010three}). The
sequence $F^{*n}\nu$ being computable, as well as the rate of convergence,
imply computability of the limit measure $\mu_{F}$.

The last step is to compute a physical measure for the flow. To this end, let
$V^{*r}$ be the subset of $\mathbb{R}^{3}$ defined by%

\[
V^{*r} = \{ (x,y,z)\in\mathbb{R}^{3} : (x,y) \in V\setminus D, z\in[0,r(x,y)]\}.
\]

In case the function $r$ is integrable,
\[
\int_{V\setminus D} r(x,y) d\mu_{F} < \infty,
\]
a measure $\mu^{*}$ on $V^{*r}$ can be naturally defined by:%

\[
\mu^{*}= \frac{ \mu_{F} \times\mu_{L} }{\int r(x,y) d\mu_{F}}
\]
where $\mu_{L}$ is again Lebesgue measure. Moreover, this measure is
computable whenever the integral above is computable. We then transport this
measure into the actual flow via the function
\[
\Phi: V^{*r} \to M : \newline \quad(x,y,t) \to\phi_t(x,y,27)
\]
where $\phi_t(x,y,z)$ is the trajectory of the flow at time $t$ starting at
$(x,y,z)$.
Clearly, the function $\Phi$ is  computable from $\phi$, which implies that the
transported measure:%

\[
\mu_{Physical} (E) = \mu^{*} (\Phi^{-1} E)
\]
where $E$ is a Borel set of $M$, is a computable measure.
Moreover, by \cite{araujo2010three}, this is the physical measure for the flow. The
following claim therefore finishes the proof of the Theorem.
\end{proof}

\begin{claim}
$\displaystyle{\int_{V^{*}} r(x,y) d\mu_{F}}$ is computable.
\end{claim}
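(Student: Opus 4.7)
The plan is to express $\int_{V\setminus D} r\, d\mu_F$ as a computable limit, with effectively controlled rate, of integrals of bounded continuous computable functions against the (computable) measure $\mu_F$. The difficulty is that $r$ is unbounded as one approaches $D$, so one cannot simply invoke $\mu_F$-integrability of continuous functions; a truncation argument with an explicit effective tail bound is required.

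For each rational $\delta>0$, let $\chi_\delta:V\to[0,1]$ be a computable continuous cut-off equal to $1$ on $\{|x|\geq\delta\}$ and to $0$ on $\{|x|\leq\delta/2\}$. By Lemma \ref{poincaremap}, $r$ is computable on $V\setminus D$, so $r\chi_\delta$ (extended by $0$ on $D$) is a bounded continuous computable function on $V$, uniformly in $\delta$. Since $\mu_F$ is a computable measure, each $I_\delta:=\int_V r\chi_\delta\, d\mu_F$ is a computable real uniformly in $\delta$, and $I_\delta\nearrow\int_{V\setminus D} r\, d\mu_F$ by monotone convergence. It thus suffices to exhibit a computable function $\Psi(\delta)\to 0$ bounding the error:
\[
\int_{V\setminus D} r\, d\mu_F - I_\delta \;\leq\; \int_{\{|x|<\delta\}} r\, d\mu_F \;\leq\; \Psi(\delta).
\]

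To produce such a $\Psi$ I would combine two effective estimates. First, a logarithmic bound $r(x,y)\leq C_1 - C_2\log|x|$ valid on $V\setminus D$, with $C_1,C_2>0$ computable from the $C^2$ name of $\phi$: this follows from local analysis of the flow near the hyperbolic fixed point at the origin, whose eigenvalues are computable from $Dh(0)$, together with a computation of the escape time from a small computable box around it. Second, a computable upper bound $M$ on the $L^\infty$ density $\rho_f$ of $\mu_f$ with respect to Lebesgue measure (classical for $f$ expanding via a Lasota--Yorke-type estimate, and effective in the same manner that computability of $\mu_f$ itself was obtained above via \cite{GHR12}). Since $\pi_*\mu_F=\mu_f$, where $\pi$ is projection to the $x$-coordinate, these combine to give
\[
\int_{\{|x|<\delta\}} r\, d\mu_F \;\leq\; \int_{-\delta}^{\delta}(C_1 - C_2\log|x|)\,\rho_f(x)\,dx \;\leq\; 2M\delta\bigl(C_1 + C_2(1-\log\delta)\bigr) \;=:\; \Psi(\delta),
\]
which is computable in $\delta$ and tends to $0$ as $\delta\to 0^+$.

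The main obstacle is making the two ingredients above genuinely \emph{effective}, i.e.\ producing $C_1$, $C_2$, and $M$ as computable reals from $\phi$. Both are classical in their qualitative form; the quantitative hyperbolicity needed for the logarithmic bound can be extracted from an effective local linearization of the flow in a computable neighborhood of the origin, while the $L^\infty$ bound on $\rho_f$ is a by-product of the construction in \cite{GHR12} used to compute $\mu_f$. Granting these, taking $\delta=1/n$ presents $\int_{V\setminus D} r\, d\mu_F$ as the limit of the computable sequence $\{I_{1/n}\}$ with computable error modulus $\Psi(1/n)$, and hence as a computable real.
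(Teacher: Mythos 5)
Your argument is correct and takes essentially the same route as the paper: both compute the contribution away from the singular line $D$ directly (using computability of $r$ there and of the invariant measure) and control the tail near $D$ by a logarithmic bound on the return time combined with boundedness of the density of $\mu_f$, the paper quoting the estimate $|r(x)-r(y)|\leq C\,|\ln|x|-\ln|y||$ from \cite{luzzatto2005lorenz} and first reducing to the one-dimensional integral $\int_I r\,d\mu_f$, which is exactly your observation that $\pi_*\mu_F=\mu_f$. The effectivity of the constants ($C$ and the density bound $M$) that you flag as the remaining obstacle is treated with no more detail in the paper, so your sketch matches its level of rigor.
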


\begin{proof}[Proof of the Claim] Since the return function $r(x,y)$ depends only on the $x$
coordinate, we have that $\int_{V^{*}} r(x,y) d\mu_{F} = \int_{I} r(x)
d\mu_{f} $, where $r(x)$ is the projection of $r$ onto $I$. We have already
seen that $r(x)$ is a computable unbounded function on $I\setminus\{0\}$
(Lemma \ref{poincaremap}).
The following estimate is shown in \cite{luzzatto2005lorenz}:
\[
|r(x)-r(y)|\leq C|\ln|x|-\ln|y| |
\]
for all $x,y>0$ and all $x,y<0$, where $C\geq1$ is a constant. We show that
$\int_{(0,1]}r(x)d\mu_{f}$ is computable. Since $r(x)$ is computable and
bounded on $[\epsilon,1]$, we have that $\int_{\epsilon}^{1} r(x) d\mu_{f}$ is
computable. Thus, we only need to estimate $\int_{0}^{\epsilon}r(x) d\mu_{f}$.
By the inequality above, we have that $|r(x)-r(1)|\leq C |\ln|x|-\ln|1||=C
|\ln|x||$ so that, for $x>0$ we have $r(x) \leq C|\ln(x)| + r(1)$. Recall that
$\mu_{f}$ is absolutely continuous with density bounded above, say by $M$.
Then
\[
\int_{0}^{\epsilon}r(x) d \mu_{f} \leq M \int_{0}^{\epsilon}r(x) \,d x \leq
M(C \epsilon[\ln(1/\epsilon) +1] + \epsilon r(1)) =O\big(\epsilon
\ln(1/\epsilon)\big).
\]
The claim then follows.
\end{proof}



\bibliographystyle{plain}
\bibliography{CompLorenz}

\end{document}